\newtheorem{thm}{Theorem}[section]
\newtheorem{df}[thm]{Definition}
\newtheorem{prop}[thm]{Proposition}
\newtheorem{lem}[thm]{Lemma}
\newcommand{\dist}{{\rm dist}}
\newcommand{\area}{{\rm Area}}
\newcommand{\length}{{\rm Length}}
\newcommand{\cv}{{\rm Conv}}
\newcommand{\Div}{{\rm div}}
\newenvironment{pf}{\noindent {\it Proof.}}{\\}
\newenvironment{Rmk}{\noindent {\it Remark.}}{\\}
\numberwithin{equation}{section}
\newcommand{\cone}{\mbox{$\times\hspace*{-0.228cm}\times$}}
\begin{document}
\title[Regularity of soap film-like surfaces spanning graphs]{Regularity of soap film-like surfaces \\spanning graphs in a Riemannian manifold}
\author[R. Gulliver, S.-H. Park, J. Pyo, and K. Seo]{Robert
Gulliver, Sung-ho Park, Juncheol Pyo, and Keomkyo Seo}

\begin{abstract}
Let $M$ be an $n$-dimensional complete simply connected Riemannian
manifold with sectional curvature bounded above by a nonpositive
constant $-\kappa^2$. Using the cone total curvature $TC(\Gamma)$ of a
graph $\Gamma$ which was introduced by Gulliver and Yamada
\cite{GY}, we prove that the density at any point of a soap
film-like surface $\Sigma$ spanning a graph $\Gamma \subset M$ is
less than or equal to $\displaystyle{\frac{1}{2\pi}\{TC(\Gamma) -
\kappa^{2}\area(p\mbox{$\times\hspace*{-0.178cm}\times$}\Gamma)\}}$.
From this density estimate we obtain the regularity theorems for
soap film-like surfaces spanning graphs with small total
curvature. In particular, when $n=3$, this density estimate implies
that if
\begin{eqnarray*}
TC(\Gamma) < 3.649\pi + \kappa^2 \displaystyle{\inf_{p\in M}
\area({p\mbox{$\times\hspace*{-0.178cm}\times$}\Gamma})},
\end{eqnarray*}
then the only possible singularities of a piecewise smooth
$(\mathbf{M},0,\delta)$-minimizing set $\Sigma$ are the
$Y$-singularity cone. %% and the $T$-singularity cone.
In a manifold with sectional curvature bounded above by $b^2$ and
diameter bounded by $\pi/b$, we obtain similar results for any
soap film-like surfaces spanning a graph with the corresponding
bound on cone total curvature.  \\

\noindent {\it Mathematics Subject Classification(2000)} : 58E35, 49Q20\\
\noindent {\it Key Words and phrases} : soap film-like surface,
graph, density.
\end{abstract}

\maketitle

% Section 1
%%%%%%%%%%%%%%%%%%%%%%%%%%%%%%%%%%%%%%%%%%%%%%%%%%%%%%%%%%%%%%%%%%%%%%
\section{Introduction}
%%%%%%%%%%%%%%%%%%%%%%%%%%%%%%%%%%%%%%%%%%%%%%%%%%%%%%%%%%%%%%%%%%%%%%
In 2002, Ekholm, White, and Wienholtz \cite{EWW} ingeniously
proved that in an $n$-dimensional Euclidean space, classical
minimal surfaces of arbitrary topological type bounded by a Jordan
curve with total curvature at most $4\pi$ must be smoothly
embedded. One year later, Choe and Gulliver \cite{CG} extended the
Ekholm-White-Wienholtz result to minimal surfaces in an
$n$-dimensional complete simply connected Riemannian manifold $M$
with sectional curvature bounded above by a nonpositive constant
$-\kappa^2$. More precisely, they proved that if $\Gamma$ is a
Jordan curve in $M$ with total curvature
\begin{eqnarray*}
TC(\Gamma) := \int_\Gamma |\vec{k}| ds \leq 4\pi +
\kappa^2\inf_{p\in M} \area (p \cone \Gamma),
\end{eqnarray*}
then every branched minimal surface bounded by $\Gamma$ is
embedded. Here $p\cone\Gamma$ is the $geodesic$ $cone$, which is
the union of the geodesic segments $\overline{pq}$ from $p$ to $q$
over all $q\in \Gamma$. Moreover they proved a similar theorem for
minimal surfaces in the hemisphere (\cite{CG}).

For nonclassical minimal surfaces,  Ekholm, White, and Wienholtz
also showed that for a given soap film-like surface $\Sigma$ with
a simple closed boundary curve $\Gamma$ and with density at least
one at every point on the support of $\Sigma \setminus \Gamma$,
the condition $TC(\Gamma) \leq 3\pi$ implies that $\Sigma$ is
smooth in the interior. Recall that the {\it density} of a surface
$\Sigma$ at a point $p\in M$ is defined to be
\begin{eqnarray*}
\Theta (\Sigma, p) = \lim_{\varepsilon \rightarrow 0}\frac{\area
(\Sigma \cap B_\varepsilon (p))}{\pi \varepsilon^2},
\end{eqnarray*}
where $B_\varepsilon (p)$ is the geodesic ball of $M$ with radius
$\varepsilon$, centered at $p$. Recently, Gulliver and Yamada
\cite{GY} proved that similar regularity theorems hold for soap
film-like surfaces spanning graphs in $\mathbb{R}^n$ 
%(see below
%for the definition of a graph.)

A {\it graph} $\Gamma$ is a finite union of closed arcs $a_i$
meeting at
vertices $q_j$, each of which has valence at least $2$. Here the
{\it valence} of a vertex $q$ is defined by the number of times
$q$ occurs as an end point among all of the $1$-simplices $a_i$.
We assume that each $a_i$ is $C^2$ and meets its end points with
$C^1$ smoothness. As in \cite{GY}, we define the {\it
contribution to cone total curvature} $tc(q)$ at a vertex $q$ by
\begin{eqnarray*}
tc(q):=\sup_{e\in T_{q}(M),\|e\|=1}\Big\{\sum_{a_{k}:q\in
a_{k}}\Big(\frac{\pi}{2}-\angle_{q}(T_{k}(q),e)\Big)\Big\},
\end{eqnarray*}
where $\angle_{q}(T_{k}(q),e)$ is the angle between the tangent
vector $T_{k}(q)$ pointing into $a_k$ at $q$ and the direction $e$.
If
$\Gamma$ is a piecewise smooth Jordan curve, then the contribution
to cone total curvature $tc(q)$ at $q$ is nothing but the exterior
angle at the vertex $q$: in fact, the supremum is assumed for $e$
in the shorter great-circle arc of $S^{n-1}$
between the two tangent vectors at $q$. Define
the {\it cone total curvature} $TC(\Gamma)$ of a graph $\Gamma$ by
\begin{eqnarray*}
TC(\Gamma):=\int_{\Gamma^{\rm reg}}|\overrightarrow{k}|ds \ + \
\sum_{q \in V(\Gamma)} tc(q),
\end{eqnarray*}
where $V(\Gamma) :=\{ {\rm vertices \ of \ \Gamma} \}$ and
$\Gamma^{\rm reg} := \Gamma \setminus V(\Gamma)$ .\\[2mm]

In soap film experiments, one can
construct soap films spanning graphs so that three surfaces meet
along an edge or six surfaces meet at a point. Indeed there are
only two possible singularities on area-minimizing soap film-like
surfaces in $\mathbf{R}^3$ (see \cite{AT}). The tangent cone to
a soap film-like surface $\Sigma$ at a singularity is an
area-minimizing cone: either the $Y$-singularity cone, which
consists of three
half-planes meeting at $120^\circ$, or the $T$-singularity cone
which is spanned by the regular tetrahedron with vertex at its
center of mass. We will denote by
$C_Y = 3/2$ the density at its vertex of the $Y$-singularity cone
and by $C_T = \frac{3}{\pi} \arccos (-\frac{1}{3}) \approx 1.8245$
the density at its vertex of the $T$-singularity cone.

Introducing a notion of {\em cone total curvature} $TC(\Gamma)$
for a graph $\Gamma$, that is, a finite $1$-dimensional curved
polyhedron, Gulliver
and Yamada proved that for a soap film-like surface $\Sigma$
spanning a graph $\Gamma$ in $\mathbb{R}^n$, if $TC(\Gamma) \leq
2\pi C_Y = 3\pi$, then $\Sigma$ is an embedded smooth surface or a
subset of the $Y$-singularity cone with planar faces. Moreover
they proved that if $TC(\Gamma) \leq 2\pi C_T$, then a soap
film-like surface $\Sigma$ spanning $\Gamma$ in $\mathbb{R}^3$ has
possibly $Y$-singularities but no other singularities unless it is
a subset of the $T$-singularity cone with planar faces. In this
paper we extend the Gulliver-Yamada results to soap film-like
surfaces spanning graphs in an $n$-dimensional Riemannian manifold
$M$ with an upper bound on sectional curvature.

Let $\mathcal{S}_\Gamma$ be the collection of all immersed images
$\Sigma = \cup \Sigma_i$
of a finite union of $C^2$-smooth open two-dimensional manifolds
$\Sigma_i$ with compact closure, of class $C^1$ up to the
piecewise $C^1$ boundary $\partial \Sigma_i$, so that
$\Gamma \subset \cup \partial \Sigma_i$. Throughout this paper,
we consider singular surfaces spanning an embedded graph $\Gamma$
in the class $\mathcal{S}_\Gamma$.  In particular,
$\Sigma \in \mathcal{S}_\Gamma$ is a rectifiable varifold 
\cite{Almgren}.  A surface $\Sigma$ in $\mathcal{S}_\Gamma$
is said to be {\it strongly stationary with respect to} $\Gamma$
if the first variation of the area of $\Sigma$ is at most equal to
the integral over $\Gamma$ of the length of the orthogonal component
of the variational vector field normal to
$\Gamma$. (See Definition \ref{def:strongly stationarity} below
and also \cite{EWW}, \cite{GY}.) Unfortunately one cannot
distinguish the boundary of a singular surface $\Sigma \in
\mathcal{S}_\Gamma$ from $\cup \partial \Sigma_i$ in a classical
sense. Moreover the definition of boundary motivated by Stokes'
Theorem \cite{Federer} is no longer valid (other than {\it modulo} 
two) because $\Sigma$ is not
orientable in general. Therefore we define the boundary of
$\Sigma\in \mathcal{S}_\Gamma$ in variational terms. (See
Definition \ref{def:variational boundary} below.)

Let $M$ be an $n$-dimensional complete simply connected Riemannian
manifold with sectional curvature bounded above by a nonpositive
constant $-\kappa^2$. We shall prove that if $\Gamma$ is an
embedded graph in $M$ with
\begin{eqnarray*}
TC(\Gamma) \leq 3\pi + \kappa^2 \Big(\inf_{p\in M}
\area({p\cone\Gamma})\Big),
\end{eqnarray*}
then a soap film-like surface $\Sigma$ spanning $\Gamma$ is an
embedded surface or a subset of the piecewise totally geodesic
$Y$-singularity cone (Theorem
\ref{thm:3pi}). More precisely, the infimum of $\area (p\cone
\Gamma)$ may be taken only over $p$ in the geodesic convex hull
$\cv (\Gamma)$ of
$\Gamma$. Furthermore, we shall prove that if $\Gamma$ is a graph
in $M^3$ with
\begin{eqnarray*}
TC(\Gamma) \leq 2\pi C_T+ \kappa^2 \Big(\inf_{p\in \cv(\Gamma)}
\area({p\cone\Gamma})\Big),
\end{eqnarray*}
then the only possible singularities of a soap film-like surface
$\Sigma$ in $M^3$ are the $Y$-singularity cone or, in special
cases, the
$T$-singularity cone (Theorem \ref{thm:2pi}). Similar results for
a manifold with bounded diameter, and the corresponding
positive upper bound on
sectional curvature, will be demonstrated in Section 4.

The key steps of the proofs of our theorems are as follows. We
first compare the density $\Theta (\Sigma, p)$ of $\Sigma$ in
$\mathcal{S}_\Gamma$ with the density $\Theta (p\cone\Gamma, p)$
of the cone $p\cone \Gamma$ for $p\in \Sigma\setminus \Gamma$.
This is related to the monotonicity property of minimal surfaces.
Next, we construct a certain $2$-dimensional cone with constant
curvature metric locally isometric to
the simply connected space form $\overline{M}^2$ of
constant sectional curvature, which corresponds to
$p\cone\Gamma$ in $M$. With comparison results for geodesic
curvature of cones and the Gauss-Bonnet
Theorem for the cone, we shall obtain the above theorems.

% Section 2
%%%%%%%%%%%%%%%%%%%%%%%%%%%%%%%%%%%%%%%%%%%%%%%%%%%%%%%%%%%%%%%%%%%%%%
\section{Preliminaries}
%%%%%%%%%%%%%%%%%%%%%%%%%%%%%%%%%%%%%%%%%%%%%%%%%%%%%%%%%%%%%%%%%%%%%%
Let $\Gamma$ be a graph in an $n$-dimensional Riemannian manifold
$M$, and let $\mathcal{S}_\Gamma$ be the class of singular
surfaces as described in the introduction.
% Def 2.1
\begin{df}[\cite{EWW}] \label{def:strongly stationarity}
{\rm A rectifiable varifold $\Sigma$ in $M$ is called }strongly
stationary {\rm with respect to $\Gamma$ if for all smooth
variations $\phi : \mathbb{R} \times M \rightarrow M$ with $\phi(0,
x) \equiv x$ we have
\begin{eqnarray*}
\frac{d}{dt}\Big|_{t=0} \Big( \area (\phi(t, \Sigma)) + \area
(\phi([0,t]\times \Gamma))\Big) \geq 0 .
\end{eqnarray*}
}
\end{df}
Note that the strong stationarity of $\Sigma$ implies that $\Sigma
\in \mathcal{S}_\Gamma$ is stationary. In fact, the first variation
formula for  area \cite{Simon} gives
\begin{eqnarray*}
\frac{d}{dt}\Big|_{t=0} \area (\phi(t, \Sigma)) = -\int_{\Sigma}
\langle \overrightarrow{H}, X^{\perp}\rangle dA + \int_{\cup
\partial \Sigma_i} \sum_{j \in J(p)} \langle \nu_j (p), X^{\perp}
(p) \rangle ds,
\end{eqnarray*}
where $\overrightarrow{H}$ is a mean curvature vector of $\Sigma$;
$X^{\perp}$ is the normal component of the variational vector
field $X=\frac{\partial \phi }{\partial t} (0,x)$; $J(p)$ indexes
the collection of surfaces $\Sigma_j$ which meet at a point $p$ in
$\cup \partial \Sigma_i$; and $\nu_j(p)$ is the
outward unit conormal vector to $\partial \Sigma_j$ along 
$\cup \partial \Sigma_i$ where surfaces $\Sigma_j$ meet, 
for $j\in J(p)$.

If we take the variational vector field
$X=\frac{\partial \phi }{\partial t} (0,x)$
to be supported in $M\setminus \cup \partial \Sigma_i$,
then the stationarity of $\Sigma$ implies that the mean curvature
vector
$\overrightarrow{H}\equiv \overrightarrow{0}$ on the interior of
each $\Sigma_i$. Furthermore, if we choose $X$ supported away from
$\Gamma$, then it follows that
\begin{eqnarray} \label{balancing}
\nu_{\Sigma}(p):= \sum_{j\in J(p)\subset
I}\nu_{\Sigma_{j}}(p)=\overrightarrow{0}
\end{eqnarray}
for almost all $p$ on $\cup \partial \Sigma_i \setminus \Gamma$,
since the choice of $X$ is arbitrary along
$\cup \partial \Sigma_i \setminus \Gamma$. We call the equation
(\ref{balancing}) the {\it balancing}
of $\nu_{\Sigma_i}$ along the singular curves of $\Sigma$ away
from $\Gamma$ \cite[p.\ 319]{GY}.

It should be mentioned that the strong stationarity with respect
to $\Gamma$ for a surface in $\mathcal{S}_\Gamma$ is equivalent to
stationarity in $M\setminus \Gamma$ plus the following boundary
condition.
% Def 2.2
\begin{df}[\cite{EWW}, \cite{GY}] \label{def:variational boundary}
{\rm $\Gamma$  is said to be the} variational boundary {\rm of a
surface $\Sigma$ if there exists an $\mathcal{H}^1$ measurable
vector field $\nu_\Sigma$ along $\Gamma$ which is orthogonal to
$\Gamma$, with $|\nu_\Sigma| \leq 1$ a.e., such that for all smooth
vector fields $X$ defined on $M$,
\begin{eqnarray*}
\int_{\Sigma} \Div X^T \, dA = \int_\Gamma \langle X, \nu_{\Sigma}
\rangle \, ds .
\end{eqnarray*}
}
\end{df}
Another good mathematical model for soap films is an
$(\mathbf{M},\varepsilon,\delta)$-minimal set, as was introduced
by F. Almgren \cite{Almgren}.
% Def 2.3
\begin{df}[\cite{Almgren}]
{\rm Let $\varepsilon$ be a function $\varepsilon(r) = Cr^\alpha$
for some $0\leq C <\infty$, $0<\alpha<1/3$ and $\delta>0$.
$\Sigma \subset M$ is said to be} an
$(\mathbf{M},\varepsilon,\delta)$-minimal set {\rm with respect to
$\Gamma \subset M$ if $\Sigma$ is an $m$-dimensional rectifiable
set and if, for every Lipschitz mapping $\phi : M \rightarrow M$
with the diameter $r={\rm diam}(W \cup \phi(W)) < \delta$,
\begin{eqnarray*}
\mathcal{H}^m(\Sigma \cap W) \leq (1 +
\varepsilon(r))\,\mathcal{H}^m ( \phi(\Sigma \cap W)),
\end{eqnarray*}
where $W = \{x : \phi (x) \neq x \}$. }
\end{df}

The following theorems for soap film-like surfaces in
$\mathbb{R}^3$ are due to Jean Taylor.
%
% Theorem 2.4
\begin{thm}[\cite{Taylor}] \label{Taylor:cone}
The tangent cone of an $(\mathbf{M},0,\delta)$-minimal set
$\Sigma$ at $p \in \Sigma \setminus \Gamma$ is
area minimizing with respect to the
intersection with the unit sphere at $p$; Moreover, the plane, the
$Y$-cone and the $T$-cone are the only possibilities for
minimizing cones.
\end{thm}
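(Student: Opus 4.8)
The plan is to reduce the statement to a classification of area-minimizing cones in $\mathbb{R}^3$ by a blow-up at $p$, and then to carry out that classification on the unit sphere using Gauss--Bonnet together with a minimality comparison. First I would establish a monotonicity formula for the density ratio $r \mapsto \mathcal{H}^2(\Sigma \cap B_r(p))/(\pi r^2)$ at an interior point $p \in \Sigma \setminus \Gamma$. Since $\varepsilon = 0$, the set $\Sigma$ is genuinely area-minimizing under Lipschitz deformations of diameter less than $\delta$, so the first-variation computation recorded after Definition \ref{def:strongly stationarity}, with $\overrightarrow{H} \equiv \overrightarrow{0}$, yields monotonicity up to an error controlled by the ambient curvature, which is absorbed upon rescaling. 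Monotonicity gives a well-defined density $\Theta(\Sigma, p)$ and, by Almgren's compactness theory for minimizing sets, a subsequence of the rescalings $\Sigma_{p,\lambda} := \lambda^{-1}\exp_p^{-1}(\Sigma)$ in $T_pM \cong \mathbb{R}^3$ converges to a limit $C$. The equality case of monotonicity forces $C$ to be a cone, and since rescaling drives $\delta \to \infty$, the cone $C$ is area-minimizing in $\mathbb{R}^3$. This already gives the first assertion: the tangent cone is area-minimizing, equivalently its link $\gamma := C \cap S^2$ is a length-minimizing geodesic net.

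Next I would determine the local structure of $\gamma$. Away from junctions the cone $C$ is a smooth minimal surface invariant under dilation, hence a union of flat sectors, so $\gamma$ is a union of great-circle arcs. At a vertex of degree $d$ with outward unit tangent vectors $t_1, \dots, t_d$, stationarity and the balancing condition (\ref{balancing}) force $\sum_i t_i = \overrightarrow{0}$: the case $d=1$ is impossible, $d=2$ merely continues a geodesic, and $d=3$ compels the three arcs to meet pairwise at $120^\circ$; the degrees $d \ge 4$ are consistent with stationarity but are excluded by minimality. Hence $\gamma$ is a net of great-circle arcs whose only singularities are triple points at $120^\circ$, and the complementary regions are geodesic polygons all of whose interior angles equal $2\pi/3$.

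Then I would classify such nets. For a geodesic $k$-gon face with all angles $2\pi/3$, the spherical Gauss--Bonnet theorem gives its area as $k \cdot \tfrac{2\pi}{3} - (k-2)\pi = 2\pi - \tfrac{k\pi}{3}$, so positivity forces $k \le 5$; combined with trivalence $3V = 2E$ and Euler's relation $V - E + F = 2$ one obtains $F = 2 + V/2$ together with the averaging bound $3V/F = 6V/(4+V) < 6$, which in the presence of the per-face constraint $k\le 5$ leaves only finitely many candidate nets. The cases $V = 0$, $V = 2$, and $V = 4$ produce exactly the great circle (the plane), the $\Theta$-net of three semicircles meeting at two antipodal triple points (the $Y$-cone), and the tetrahedral net of four triple points and six arcs (the $T$-cone).

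The main obstacle is the final step: showing that every remaining stationary net, those with $V \ge 6$ and any asymmetric configuration still permitted by the Gauss--Bonnet and Euler count, fails to be \emph{length-minimizing}, and dually that the three surviving cones genuinely are minimizing. Stationarity alone does not separate these, so one must exhibit, for each higher-complexity net, an explicit length-decreasing admissible competitor, typically by a cut-and-paste regrouping of arcs across a short geodesic that strictly lowers total length, and must certify the three listed cones as minimizers, for instance by a calibration. This minimality analysis, rather than the combinatorial bookkeeping, is the genuinely difficult part and constitutes the heart of Taylor's theorem.
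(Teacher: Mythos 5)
This statement is not proved in the paper at all: it is imported verbatim from Taylor's 1976 Annals paper and cited as \cite{Taylor}, so there is no internal argument to compare your proposal against. What you have written is therefore an attempt to reconstruct Taylor's theorem itself, and it should be judged on those terms.

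As a roadmap your sketch does follow the historically correct strategy (monotonicity and blow-up to reduce to minimizing cones in $\mathbb{R}^3$; reduction to geodesic nets on $S^2$; Gauss--Bonnet plus Euler's relation to bound the combinatorics; elimination of the non-minimizing stationary nets by explicit competitors). But two of the steps you pass over quickly are precisely where the substance lies, and as written they are gaps rather than routine verifications. First, the assertion that the blow-up limit is a cone over a \emph{finite} geodesic net whose only singularities are isolated vertices presupposes the interior regularity and structure theory for $(\mathbf{M},0,\delta)$-minimal sets --- existence of the density, compactness of rescalings, the fact that the limit is a cone consisting of flat sectors, and the exclusion of vertices of valence $\geq 4$ (which are perfectly stationary: two orthogonal great circles satisfy your balancing condition $\sum_i t_i = 0$). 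None of this follows from the first-variation identity recorded after Definition \ref{def:strongly stationarity}; it is the bulk of Almgren's memoir \cite{Almgren} and of \cite{Taylor}. Second, your combinatorial count does not by itself yield only three nets: the classical list of stationary trivalent geodesic nets on $S^2$ (Lamarle, Heppes) has ten members, including the prism and cube nets with $V\geq 6$, and your claim that ``$V=0,2,4$ produce exactly'' the circle, the $\Theta$-net and the tetrahedral net leaves all of these alive. You do acknowledge that eliminating them by cut-and-paste competitors, and certifying the three survivors as minimizers, is the heart of the matter --- but that acknowledgement is a statement of the problem, not a proof. So the proposal is an accurate outline of Taylor's approach with the decisive analytic and comparison arguments left unexecuted.
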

%

%
% Theorem 2.5
\begin{thm}[\cite{Taylor}] \label{Taylor:regularity}
An $(\mathbf{M},\varepsilon,\delta)$-minimal set with respect to
$\Gamma$ consists of $C^{1,\alpha}$ surfaces meeting smoothly in
threes at $120^{\circ}$ angles along smooth curves, with these
curves meeting in fours at angles of $\arccos (-1/3)$, away from
$\Gamma$.
\end{thm}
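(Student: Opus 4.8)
The plan is to reduce the full statement to two independent pieces: a classification of the possible blow-up (tangent) cones, and a quantitative decay estimate showing that near each interior point the set converges to a \emph{unique} tangent cone at a definite rate. The classification is already available to us: Theorem \ref{Taylor:cone} guarantees that every tangent cone of $\Sigma$ at $p \in \Sigma \setminus \Gamma$ is area minimizing and that the only such cones are the plane, the $Y$-cone, and the $T$-cone. So the entire burden falls on promoting this pointwise information about \emph{some} tangent cone into $C^{1,\alpha}$ structural regularity in a full neighborhood of $p$.

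First I would establish monotonicity of the mass ratio $r \mapsto r^{-2}\,\mathcal{H}^2(\Sigma \cap B_r(p))$, up to the multiplicative error $(1+\varepsilon(r))$ coming from the $(\mathbf{M},\varepsilon,\delta)$ condition; since $\varepsilon(r)=Cr^\alpha$ with $\alpha<1/3$, the density $\Theta(\Sigma,p)$ exists and the ratio is ``almost'' monotone with an integrable defect. Combined with Almgren's compactness for $(\mathbf{M},\varepsilon,\delta)$-minimal sets, this yields, along any sequence of dilations centered at $p$, subsequential convergence to a minimizing cone, which by Theorem \ref{Taylor:cone} is a plane, a $Y$-cone, or a $T$-cone, with density $1$, $C_Y=3/2$, or $C_T$ respectively.

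The hard part will be to upgrade subsequential convergence to a unique tangent cone together with a power-law decay of the excess. I would quantify the deviation of $\Sigma$ from a reference cone $C$ by a cylindrical excess, roughly the $L^2$ distance of $\Sigma$ from $C$ at unit scale, and prove an \textbf{epiperimetric inequality}: whenever this excess is sufficiently small, one can construct a competitor surface, built by interpolating between $\Sigma$ and the cone over $\Sigma \cap \partial B_1(p)$, whose mass decreases the excess by a fixed factor $1-\theta$ with $\theta \in (0,1)$. Feeding the epiperimetric inequality into the almost-monotonicity of the mass ratio converts it into a differential inequality for the excess as a function of scale, forcing geometric, hence power-law, decay; uniqueness of the tangent cone and the rate of convergence follow. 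Constructing the competitor near the $Y$- and $T$-cones, where the surface is genuinely singular and one must respect the balancing (\ref{balancing}) of conormals along the singular curves, is the principal technical obstacle, since the interpolation must stay in the admissible class while realizing the mass-reducing geometry at the branching set.

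Finally, the decay estimate gives $C^{1,\alpha}$ convergence of $\Sigma$ to its tangent cone at each point. Where the tangent cone is a plane, this is classical interior regularity and $\Sigma$ is a smooth sheet. Where the tangent cone is the $Y$-cone, the decay shows $\Sigma$ is, near $p$, a $C^{1,\alpha}$ perturbation of three half-planes meeting at $120^\circ$, so three sheets meet along a $C^{1,\alpha}$ curve; applying planar regularity to each sheet together with a boundary-regularity argument along the triple curve promotes the curve to be smooth and the sheets to meet smoothly. Where the tangent cone is the $T$-cone, the same reasoning, now iterated on the four triple-curves emanating from the tetrahedral point, shows that these curves meet in fours at the angle $\arccos(-1/3)$. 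Any further singularity type is excluded by the classification, and since the whole analysis is interior it holds away from $\Gamma$, completing the proof.
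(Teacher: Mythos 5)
This statement is not proved in the paper at all: it is quoted as background directly from Taylor's 1976 Annals paper \cite{Taylor}, so there is no in-paper argument to compare against. Judged on its own terms, your outline does follow the architecture of Taylor's actual proof --- almost-monotonicity of the mass ratio with an integrable defect from $\varepsilon(r)=Cr^\alpha$, compactness and blow-up to a minimizing cone, classification of the cones, and then an epiperimetric inequality driving power-law decay of the excess, uniqueness of the tangent cone, and $C^{1,\alpha}$ convergence. That is the right skeleton.

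The problem is that the one step you explicitly defer --- constructing the mass-decreasing competitor that proves the epiperimetric inequality at the $Y$- and $T$-cones --- is not a technical obstacle to be waved at; it is essentially the entire content of the theorem and occupies most of Taylor's paper. Without it, your argument yields only that \emph{some} subsequential tangent cone at each point is a plane, a $Y$-cone, or a $T$-cone, which does not even imply that the singular set is locally a curve, let alone that three sheets meet along it at $120^{\circ}$: a priori the tangent cone could vary with the point or with the dilation sequence, and no neighborhood structure follows. So as a proof the proposal has a genuine gap at its load-bearing step. Two smaller issues: Theorem \ref{Taylor:cone} as stated applies to $(\mathbf{M},0,\delta)$-minimal sets, so you need to observe that blow-up limits of $(\mathbf{M},\varepsilon,\delta)$-minimal sets with $\varepsilon(r)=Cr^{\alpha}$ are $(\mathbf{M},0,\infty)$-minimal before invoking it; and the final upgrade from $C^{1,\alpha}$ to genuinely smooth sheets and curves is not part of Taylor's theorem but the separate free-boundary regularity result of Kinderlehrer, Nirenberg and Spruck \cite{KNS}, which the paper cites immediately afterwards, so your claim that interior regularity ``promotes the curve to be smooth'' conflates two distinct results.
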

It was later proved by Kinderlehrer, Nirenberg and Spruck that the
surfaces which comprise $\Sigma$ are as smooth as the ambient
manifold \cite{KNS}.

%
% Section 3 
%
%%%%%%%%%%%%%%%%%%%%%%%%%%%%%%%%%%%%%%%%%%%%%%%%%%%%%%%%%%%%%%%%%%%%%%%
\section{Regularity of soap film-like surfaces in negatively curved spaces}
%%%%%%%%%%%%%%%%%%%%%%%%%%%%%%%%%%%%%%%%%%%%%%%%%%%%%%%%%%%%%%%%%%%%%%%
Let $M$ be an $n$-dimensional complete simply connected Riemannian
manifold with sectional curvature bounded above by a nonpositive
constant $-\kappa^2$. In this section we shall derive regularity
theorems for soap film-like surfaces in $M$. The key step in the
extension of the theorems to the variable curvature ambient space
is to carry the data of $p\cone \Gamma$ over to the simply
connected space form $\overline{M}$ of constant sectional
curvature $-\kappa^2$, where $\kappa \geq 0$. To do this  we
construct a constant curvature metric on $p\cone \Gamma$. For the
sake of clarity, we give the definition as follows.

% Def 3.1
\begin{df}[\cite{Choe}, \cite{CG}]  \label{eq:q}
Let $\Gamma$ be an immersed piecewise $C^1$ curve in $M$.
Let $\widehat{g}$ be a new metric on $p\cone \Gamma$ with constant
Gauss curvature $-\kappa^{2}$, such that the distance from $p$
remains the same as in the original metric $g$, and so does the
arclength element of $\Gamma$.
\end{df}
We shall construct the new metric $\widehat{g}$ explicitly, and
then extend definition \ref{eq:q} to the case where $\Gamma$ is a
graph.  Assuming for the moment that $\Gamma$ is a curve and that
$\widehat{g}$ exists, one can see from the above definition that
every geodesic from $p$ under $g$ remains a geodesic of equal
length under $\widehat{g}$, the length of any arc of $\Gamma$
remains the same, and the angles between the tangent vector to
$\Gamma$ and the geodesic from $p$ remain unchanged.  Given a cone
$C:= p\cone \Gamma$, we shall denote by $\widehat{C}$ the
two-dimensional Riemannian manifold $(C, \widehat{g})$, which will
be singular at $p$.

Let $r(s)$ be the distance in $C$ from $p$ to the corresponding
point of $\Gamma$ for an arc-length parameter $s$ of $\Gamma$.
Then choose a point $\widehat{p} \in \mathbb{H}^2(-\kappa^2)$, and
let a curve $\widehat{\Gamma}$ locally isometric to $\Gamma$ be
traced out in $\mathbb{H}^2(-\kappa^2)$ so that the distance from
$\widehat{p}$ equals $r(s)$. $\widehat\Gamma$ will close up, to
form $\widehat\Gamma$ as a simply closed curve, in some Riemannian
covering space $\widehat M^2$ of
$\mathbb{H}^2(-\kappa^2)\setminus\widehat p$. Then $\widehat{C}$
can be written as $\widehat{p}\cone \widehat{\Gamma}$ in
$\widehat M$, along with the metric of $\widehat M$, which is
singular at $\widehat p$. (See \cite{Choe} for more details.)

To prove the next proposition, we need the following well-known
fact due to Euler. (See \cite{Ore}.)
%
% Lemma 3.2 
\begin{lem}[Euler] \label{Euler}
For a connected graph $\Gamma$ with even valence at each vertex,
there is a continuous mapping of the circle to $\Gamma$ which
traverses each edge exactly once.
\end{lem}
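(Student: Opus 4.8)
The plan is to prove the combinatorial assertion that $\Gamma$ admits an \emph{Eulerian circuit} — a closed edge-walk traversing each edge exactly once — and then to observe that such a circuit is precisely the data needed to build the desired continuous map of $S^1$. I would proceed by induction on the number of edges of $\Gamma$.

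The first step is to extract a single closed trail. Starting from an arbitrary vertex $v_0$ and walking along edges not yet used, I claim the walk can only terminate back at $v_0$. The reason is a parity count on valences. Suppose the walk has just arrived at a vertex $v \neq v_0$; every earlier visit to $v$ consumed two incident edges (one entering, one leaving) and the present arrival consumes one more, so an odd number of the edges at $v$ have been used. Since the valence of $v$ is even, an unused edge remains and the walk can continue. Thus the only vertex at which the walk can get stuck is $v_0$, and the result is a closed trail $C$ repeating no edge; since every valence is at least $2$, $C$ contains at least one edge.

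Next comes the inductive splicing. Deleting the edges of $C$ from $\Gamma$ produces a graph $\Gamma'$ in which every vertex still has even valence, because $C$ removes an even number of edge-ends at each vertex. The graph $\Gamma'$ need not be connected, but since $\Gamma$ is connected, each connected component of $\Gamma'$ that contains an edge must meet $C$ in at least one vertex. Every such component has strictly fewer edges than $\Gamma$ and even valence throughout, so by the inductive hypothesis it carries an Eulerian circuit. Traversing $C$ and, upon first reaching a vertex shared with such a component, detouring through that component's circuit before resuming $C$, I splice all these subcircuits into $C$ to obtain a single closed trail using every edge of $\Gamma$ exactly once. The recursion is well-founded because the edge count strictly decreases and terminates when no edges remain.

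Finally, I pass to a map of the circle. Writing the Eulerian circuit as an ordered cyclic list of edges, I parametrize each $C^1$ arc $a_i$ by arclength and concatenate these parametrizations head-to-tail in the order dictated by the circuit. Because the circuit is closed, the terminal point of the last arc coincides with the initial point of the first, so the concatenated path descends to a continuous map $S^1 \to \Gamma$ covering each edge once. The step demanding the most care is the splicing: one must confirm both that the recursion is well-founded (each component of $\Gamma'$ has fewer edges and even valence) and that a shared attachment vertex always exists, which is exactly the point at which connectedness of $\Gamma$ is essential.
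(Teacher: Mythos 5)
Your proof is correct and complete: the parity argument shows a maximal trail from $v_0$ can only terminate at $v_0$, the splicing induction is well-founded, and the attachment-vertex existence is exactly where connectedness enters. Note that the paper offers no proof of this lemma at all, merely citing Ore's book; what you have written is the standard Hierholzer-style argument for Euler's theorem, so there is nothing in the paper to compare it against and no gap to report.
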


Let $\Gamma'$ be the double covering of the graph
$\Gamma = \cup_{i=1}^m c_i$. By Lemma \ref{Euler}, we may choose
an ordering for each edge such that $\Gamma'$ is a piecewise smooth
immersion of $\mathbb{S}^1$. In other words,
$\Gamma'=\cup_{j=1}^{2m} c'_{j}$, where each $c_i$ arises twice
as one of the $c_j'$ for $i=1,\cdots,m$. Then Definition
\ref{eq:q} may be applied to $\Gamma'$, to construct a metric
$\widehat g$ of constant Gauss curvature on the disk
$C'=p\cone\Gamma'$. Then each smooth surface $A_i=p\cone c_i$
of the cone $C=p\cone \Gamma$ is covered by two smooth ``fans"
$p\cone c_j'$ and $p\cone c_k'$ of $C'$ ($1\leq j,k\leq 2m$).
But $c_j'$ and $c_k'$ are copies of the arc $c_i$ of $\Gamma$, so
$p\cone c_j'$ is isometric to $p\cone c_k'$ using the metric
$\widehat g$ of constant Gauss curvature for both. Thus,
$p\cone c_i$ inherits the metric $\widehat g$ from either
$p\cone c_j'$ or $p\cone c_k'$, to form the singular cone
$\widehat C$ with constant Gauss curvature on the interior of
$p\cone c_i$, $1\leq i\leq m$, and with a singular curve
$\overline{pq}$ for each vertex $q$ of $\Gamma$. This completes
the extension of Definition \ref{eq:q} for any graph $\Gamma$.

% Prop 3.3
\begin{prop} \label{prop:density in M}
Let $M$ be an $n$-dimensional simply connected Riemannian manifold
with sectional curvature $K_M \leq -\kappa^2 \leq 0$, and let
$\Sigma \in \mathcal{S}_\Gamma$ be a strongly stationary surface
in $M$. Then we have, for $p \in \Sigma \setminus \Gamma,$
$$\Theta(\Sigma,p) < \Theta(\widehat{C},p),$$
unless $\Sigma$ is a cone over $p$ with totally geodesic faces
of constant curvature $-\kappa^{2}$.
\end{prop}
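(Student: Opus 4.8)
The plan is to prove the density inequality $\Theta(\Sigma,p) < \Theta(\widehat{C},p)$ via a two-part argument: first establish a monotonicity-type comparison that bounds $\Theta(\Sigma,p)$ by the density of the cone $p\cone\Gamma$ computed in the \emph{original} metric $g$, and then use the constant-curvature model $\widehat{C}$ to re-express that cone density in a way suited to later Gauss--Bonnet arguments. The starting point is the strong stationarity hypothesis, which by the discussion preceding Definition \ref{def:variational boundary} forces $\overrightarrow H\equiv\overrightarrow 0$ on each smooth face and the balancing condition (\ref{balancing}) along interior singular curves. These two facts are exactly what make the classical monotonicity formula available for $\Sigma$ at an interior point $p\in\Sigma\setminus\Gamma$.

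The key step I would carry out first is to compare the normalized area ratio $\area(\Sigma\cap B_\varepsilon(p))/(\pi\varepsilon^2)$ with the corresponding ratio for $p\cone\Gamma$. Because $K_M\le-\kappa^2\le 0$, the ambient curvature works in our favor: geodesic spheres grow at least as fast as in the flat model, so the monotonicity quantity is nondecreasing in $\varepsilon$, and the density at $p$ is the limit as $\varepsilon\to 0$. The natural comparison surface is the geodesic cone $p\cone\Gamma$ itself, whose density at the vertex $p$ records the solid-angle contribution of $\Gamma$ seen from $p$. I would argue that among all strongly stationary surfaces spanning $\Gamma$, the cone $p\cone\Gamma$ is the extremal competitor for the area ratio at $p$, so that $\Theta(\Sigma,p)\le\Theta(p\cone\Gamma,p)$, with equality forcing $\Sigma$ to coincide with its own tangent cone over $p$ near $p$, i.e.\ to be a cone with the stated totally geodesic faces.

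The second part converts $\Theta(p\cone\Gamma,p)$ into $\Theta(\widehat C,p)$. By construction of $\widehat g$ (Definition \ref{eq:q} and the explicit extension to graphs), the distance from $p$ and the arclength of $\Gamma$, and hence the angles between $\Gamma$ and the geodesics from $p$, are preserved. Since the density at the vertex of a cone is determined infinitesimally by precisely these angular data along $\Gamma$, the vertex densities of $p\cone\Gamma$ under $g$ and of $\widehat C$ under $\widehat g$ must agree: $\Theta(p\cone\Gamma,p)=\Theta(\widehat C,p)$. Chaining this with the monotonicity comparison yields $\Theta(\Sigma,p)\le\Theta(\widehat C,p)$, with equality exactly when $\Sigma$ is the asserted geodesic cone over $p$. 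Promoting the inequality to strict, $\Theta(\Sigma,p)<\Theta(\widehat C,p)$ unless $\Sigma$ is such a cone, then follows from the equality case of monotonicity.

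The main obstacle I anticipate is making the monotonicity comparison rigorous for a \emph{singular} varifold rather than a smooth minimal surface: one must handle the interior singular curves where three or more faces meet, and verify that the balancing condition (\ref{balancing}) exactly cancels the boundary conormal terms in the first-variation formula so that no spurious contribution spoils the monotonicity of the area ratio. A secondary delicate point is confirming that the density of the geodesic cone is genuinely invariant under the metric change $g\mapsto\widehat g$ in the variable-curvature setting, which relies on the comparison of geodesic curvatures of the cone's generating rays and on the preservation of the angles $\angle_q(T_k(q),e)$ that define $tc(q)$; these are precisely the ingredients the introduction flags as feeding into the Gauss--Bonnet step for $\widehat C$.
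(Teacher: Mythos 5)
Your outline has a genuine gap at both of its hinges. First, the inequality $\Theta(\Sigma,p)\le\Theta(p\cone\Gamma,p)$ with the cone measured in the \emph{original} metric $g$ is not what the method delivers, and your justification (``the cone is the extremal competitor among strongly stationary surfaces'') is not an argument: $p\cone\Gamma$ is in general not minimal, hence not a competitor in that class at all. The actual mechanism in the paper is analytic, not variational: one takes the Green's function $G(r)=\log\tanh(\kappa r/2)$ of $\mathbb{H}^2(-\kappa^2)$, uses the fact that $\triangle_{\Sigma_i}G(r)=2\kappa^2\frac{\cosh\kappa r}{\sinh^2\kappa r}(1-|\nabla_{\Sigma_i}r|^2)\ge 0$ on each minimal face, integrates over $\Sigma_i\setminus B_\varepsilon(p)$, sums over $i$ so that the balancing condition \eqref{balancing} kills the interior singular-curve terms, and lets $\varepsilon\to 0$ to obtain the weighted boundary estimate $2\pi\Theta(\Sigma,p)\le\int_\Gamma\frac{\kappa}{\sinh\kappa r}\frac{\partial r}{\partial\nu_\Sigma}\,ds$. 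The comparison with the cone then happens \emph{inside this boundary integral}, via the pointwise inequality $\frac{\partial r}{\partial\nu_C}\ge\frac{\partial r}{\partial\nu_\Sigma}$ along $\Gamma$ — not via a comparison of area ratios of $\Sigma$ and $C$ in small balls.

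Second, your claimed identity $\Theta(p\cone\Gamma,p)=\Theta(\widehat C,p)$ is false in general: Proposition \ref{prop:CG2} gives only $\Theta(C,p)\le\Theta(\widehat C,p)$. The vertex density is governed by the angular aperture at $p$, which is \emph{not} among the data preserved by $\widehat g$ (only radial distances, the arclength of $\Gamma$, and the angles between $\Gamma$ and the radial geodesics are preserved); since $K_M\le-\kappa^2$, geodesics from $p$ spread at least as fast as in the constant-curvature model, so the aperture can only increase when passing to $\widehat g$. This is precisely why $\widehat C$ is introduced: on $\widehat C$ one has $|\nabla_{\widehat C}r|\equiv 1$ and constant curvature, so $G(r)$ is exactly harmonic and the same divergence-theorem computation yields the exact identity $2\pi\Theta(\widehat C,p)=\int_{\widehat\Gamma}\frac{\kappa}{\sinh\kappa r}\frac{\partial r}{\partial\nu_{\widehat C}}\,ds$, while on $C$ itself $G$ is merely subharmonic and the resulting inequality $\int_\Gamma\frac{\kappa}{\sinh\kappa r}\frac{\partial r}{\partial\nu_C}\,ds\ge 2\pi\Theta(C,p)$ points the wrong way for your chain. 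Combining the exact identity on $\widehat C$ with $\frac{\partial r}{\partial\nu_C}=\frac{\partial r}{\partial\nu_{\widehat C}}$ closes the argument as the paper does; the rigidity statement then comes from $\triangle_\Sigma G\equiv 0$ forcing $|\nabla_\Sigma r|\equiv 1$, i.e.\ $\Sigma$ is a cone over $p$ with totally geodesic faces, which is consistent with the last step you sketch.
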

\begin{proof}
First let us assume $\widehat K \equiv -\kappa^2 < 0$; the case
$\kappa=0$ will be treated similarly. Denote $r(x):=\dist(p,x)$,
the distance function in $M$ from $p\in M$. Let
$G(r(x)):=\log \tanh(\kappa r(x)/2)$ be the Green's function
for the two-dimensional
hyperbolic plane $\mathbb{H}^{2}(-\kappa^{2})$ with Gauss
curvature $-\kappa^{2}$. On an immersed minimal surface $\Sigma_i$
in $M$, it follows from \cite{CG} that
%
% Eqn 3.1
\begin{eqnarray}\label{green ftn on M}
\triangle_{\Sigma}G(r)=2\kappa^{2}\frac{\cosh \kappa r}{\sinh^{2}
\kappa r} (1-|\nabla_{\Sigma_{i}}r|^2) \geq 0.
\end{eqnarray}
Recall that $\Sigma = \cup_{i \in I} \Sigma_i$ and each $\Sigma_i$ is
an immersed minimal surface. Integrating \eqref{green ftn on M} over
$\Sigma_i \setminus B_\varepsilon(p)$ gives
\begin{eqnarray*}
0\leq \int_{\Sigma_i \setminus
B_\varepsilon(p)}\triangle_{\Sigma_{i}}G\,dA
=\int_{\partial(\Sigma_i \setminus B_\varepsilon(p))}
\frac{\kappa}{\sinh \kappa r}\frac{\partial r}{\partial \nu_{\Sigma_i}}ds.
\end{eqnarray*}
Write $\Gamma=\cup_{j=1}^{m} c_{j}$. For a sufficiently small
$\varepsilon >0$, let $C_{\varepsilon}:=C\setminus
B_{\varepsilon}(p) = (p\cone \Gamma)\setminus B_{\varepsilon}(p) $
and $\Sigma_{i,\varepsilon}:=\Sigma_{i} \setminus
B_{\varepsilon}(p)$. The divergence theorem yields
\begin{eqnarray*}
0\leq
\int_{\Sigma_{i,\varepsilon}}\triangle_{\Sigma_{i,\varepsilon}}G\,dA
=\int_{\partial\Sigma_{i,\varepsilon}}\frac{\kappa}{\sinh \kappa
r}\frac{\partial r}{\partial \nu_{\Sigma_{i,\varepsilon}}}ds.
\end{eqnarray*}
Each boundary $\partial\Sigma_{i,\varepsilon}$ consists of three
parts
$$\partial\Sigma_{i,\varepsilon}=(\partial\Sigma_{i}\cap\Gamma)\cup
(\partial B_{\varepsilon}(p)\cap \Sigma_{i})\cup (\partial
\Sigma_{i}\setminus (\Gamma\cup \overline{B_{\varepsilon}(p)})).$$
Summing over $i\in I$, we get
$$0\leq \int_{\Gamma}\frac{\kappa}{\sinh \kappa r}\frac{\partial r}{\partial\nu_{\Sigma}}ds
+\int_{\Sigma \cap \partial B_{\varepsilon}(p)}
\frac{\kappa}{\sinh\kappa\varepsilon}\frac{\partial r}{\partial\nu_{\Sigma}}ds
+\sum_{i\in I}\int_{\partial\Sigma_{i}
\setminus(\Gamma\cup\overline{B_{\varepsilon}(p)})}
\frac{\kappa}{\sinh \kappa r}
\frac{\partial r}{\partial\nu_{\Sigma_{i}}}ds$$
where $\nu_{\Sigma}$ is the
outward unit conormal along the $\partial\Sigma_{\varepsilon}.$
Since $\Sigma \in \mathcal{S}_{\Gamma}$, the last term vanishes
by the balancing condition (\ref{balancing}). Along
$\Sigma\cap \partial B_{\varepsilon}(p)$,
$\frac{\partial r}{\partial\nu_{\Sigma}}\rightarrow -1$ uniformly as
$\varepsilon\rightarrow0$ and hence the second term converges to
$$\lim_{\varepsilon \rightarrow 0}-\kappa \frac{\length (\Sigma \cap \partial
B_{\varepsilon}(p))}{\sinh\kappa\varepsilon}= -2\pi\Theta(\Sigma,p).$$
Therefore we obtain
\begin{equation}\label{eq:a1}
2\pi\Theta(\Sigma,p)\leq
\int_{\Gamma}\frac{\kappa}{\sinh\kappa r}
\frac{\partial r}{\partial\nu_{\Sigma}}ds.
\end{equation}

We repeat the same argument for $\widehat{C} = \widehat{p}\cone
\widehat{\Gamma}$ with $\widehat{C}$ instead of $\Sigma$ and
$A_j=\widehat{p}\cone \widehat{c_{j}}$ instead of
$\Sigma_i$. Thus the cone $\widehat{C}$ is the union of
$A_j$, $1\leq j \leq m$. Since the Green's function
$G(r)$ on $A_j$ satisfies \cite[Lemma 3]{CG}
\begin{eqnarray*}
\triangle_{A_j}G(r)=
2\kappa^{2}\frac{\cosh \kappa r}{\sinh^{2}\kappa r}(1-|\nabla_{N}r|^{2}),
\end{eqnarray*}
it follows that $\triangle_{\widehat{C}}G(r)=0$. Applying the
divergence theorem, we get
$$0= \int_{A_j\setminus
B_{\varepsilon}(p)}\triangle_{\widehat{C}}G\,dA
=\int_{\partial(A_j\setminus B_{\varepsilon}(p))}
\frac{\kappa}{\sinh\kappa r}
\frac{\partial r}{\partial\nu_{\widehat{C}}} ds.$$
Each boundary
$\partial (A_j\setminus B_{\varepsilon}(p))$
consists of three parts:
\begin{eqnarray*}
\partial(A_j \setminus
B_\varepsilon(p))=(\partial A_{j}\cap\Gamma)\cup (\partial
B_{\varepsilon}(p)\cap A_{j})\cup (\partial A_{j}\setminus
(\Gamma\cup \overline{B_{\varepsilon}(p)})).
\end{eqnarray*}
Summing the above equation over $j=1,\cdots,m$, we obtain
$$0=\int_{\widehat{\Gamma}}\frac{\kappa}{\sinh\kappa r}
\frac{\partial r}{\partial\nu_{C}}ds
+\int_{\widehat{C}\cap\partial B_{\varepsilon}(p)}
\frac{\kappa}{\sinh\kappa\varepsilon}
\frac{\partial r}{\partial\nu_{\widehat{C}}}ds
+\sum_{j}\int_{\widehat{p}\cone(\partial\widehat{c_{j}})\setminus
B_{\varepsilon}(p)}\frac{\kappa}{\sinh\kappa r}
\frac{\partial r}{\partial\nu_{A_j}}ds,$$
where $\nu_{\widehat{C}} =\nu_{A_j}$
along $\partial A_j$ and
$\nu_{A_j}$ is the outward unit conormal vector along
the boundary $\partial A_j$. Since the conormal vector
$\nu_{A_j}$ and $\overline{\nabla} r$ are
perpendicular along $\widehat{p}\cone(\partial \widehat{c}_{j})$,
the last term vanishes. As $\varepsilon \rightarrow 0$, the second
term converges to
\begin{eqnarray*}
\lim_{\varepsilon \rightarrow 0}-\kappa \frac{\length
(\widehat{C}\cap B_{\varepsilon}(p))}{\sinh
\kappa{\varepsilon}}=-2\pi \Theta(\widehat{C},p).
\end{eqnarray*}
 Hence we have
\begin{eqnarray} \label{eq:a2}
 2\pi\Theta(\widehat{C},p)=
\int_{\widehat{\Gamma}} \frac{\kappa}{\sinh \kappa
r}\frac{\partial r}{\partial\nu_{\widehat{C}}}ds.
\end{eqnarray}
On the other hand, from the definition of $\widehat{C}$ it is easy
to see that
\begin{eqnarray} \label{eq:widehat C}
\frac{\partial r}{\partial \nu_{C}}= \frac{\partial
r}{\partial\nu_{\widehat{C}}}.
\end{eqnarray}
 Since $\frac{\partial r}{\partial \nu_{C}}\geq\frac{\partial
r}{\partial\nu_{\Sigma}}$ almost everywhere along $\Gamma$, the
inequality (\ref{eq:a1}) and the equations (\ref{eq:a2}),
(\ref{eq:widehat C}) imply that
$$2\pi\Theta(\Sigma,p)\leq
\int_{\Gamma}\frac{\kappa}{\sinh\kappa r}
\frac{\partial r}{\partial\nu_{\Sigma}}ds\leq
\int_{\widehat{\Gamma}}\frac{\kappa}{\sinh\kappa r}
\frac{\partial r}{\partial\nu_{\widehat{C}}}ds
=2\pi\Theta(\widehat{C},p).$$
If equality holds, then $\triangle_{\Sigma} G(r) \equiv0$, which
requires $|\nabla_{\Sigma}r|\equiv1$. This can happen only when
$\Sigma$ is totally geodesic. Similarly, using $G(r)=\log r$, we
can prove the case where $\kappa = 0$.
\end{proof}

The following proposition was stated, but not proved, in Remark 3
of \cite{CG}. The version we will need in this section already
appears as Proposition 4 of that paper; in the following section,
we shall require this more general version.
%
% Prop 3.4
\begin{prop} \label{prop:CG1}
Let $g$ and $\widehat g$ be two continuous, piecewise $C^2$
metrics on the two-dimensional disk C, possibly singular at a
certain point $p \in C$, such that the geodesics through $p$ (the
``radial" geodesics) with respect to $g$ and $\widehat g$ are the same,
with the same arc-length parameter along each radial geodesic. Assume
that the arc length along the Lipschitz continuous boundary
$\Gamma = \partial C$ is the same with respect to both metrics.
For both metrics, assume there are no conjugate points along any
radial geodesic. If the Gauss curvatures satisfy
$\widehat K \geq K$ at each point of $C$, then the inward geodesic
curvatures satisfy $\widehat k \leq k$ at each point of $\Gamma$.
\end{prop}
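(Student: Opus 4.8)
The plan is to set up polar-type coordinates adapted to the radial geodesics through $p$ and to compare the two metrics coefficient by coordinate. Since the radial geodesics and their arc-length parametrizations coincide for $g$ and $\widehat g$, I would write each metric in geodesic polar form $ds^2 = dr^2 + f(r,\theta)^2\,d\theta^2$ and $d\widehat s^2 = dr^2 + \widehat f(r,\theta)^2\,d\theta^2$, where $r$ is the common distance from $p$ and $\theta$ labels the radial geodesics. The radial direction $\partial_r$ is a common unit geodesic field for both metrics, and the hypothesis of no conjugate points guarantees that $f,\widehat f>0$ along each radial geodesic away from $p$, so the coordinates are genuinely valid on $C\setminus\{p\}$.

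First I would record the standard Jacobi equation satisfied by the metric coefficient: along each radial geodesic $\theta=\mathrm{const}$, one has $\partial_r^2 f + K f = 0$ and $\partial_r^2 \widehat f + \widehat K\,\widehat f = 0$, with initial conditions $f(0,\theta)=0$, $\partial_r f(0,\theta)=1$ (and likewise for $\widehat f$) coming from the regularity of the geodesic exit from $p$. With $\widehat K \ge K$ pointwise and matching initial data, a Sturm-type comparison yields $\widehat f \le f$ along each radial geodesic, at least up to the first conjugate point — which by hypothesis does not occur. I would state this carefully as: because $\widehat K \ge K$, the solution $\widehat f$ of the Jacobi equation with larger potential stays below $f$, so $\widehat f(r,\theta) \le f(r,\theta)$ for all $(r,\theta)$ in the coordinate domain.

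Next I would express the inward geodesic curvature of $\Gamma=\partial C$ in these coordinates. Writing $\Gamma$ as a curve $r=r(s)$, $\theta=\theta(s)$ parametrized by the common arc length $s$, the geodesic curvature $k$ of $\Gamma$ with respect to $g$ has a formula involving $f$, its $r$-derivative $\partial_r f$, and the derivatives $r'(s),\theta'(s)$; the same formula with $f$ replaced by $\widehat f$ gives $\widehat k$. The crucial observation is that the equal-arc-length condition forces $r'(s),\theta'(s)$ and the combination $dr^2 + f^2 d\theta^2 = dr^2 + \widehat f^2 d\theta^2 = ds^2$ to be compatible in both metrics, so the angle the curve makes with the radial direction is controlled identically. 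The inward geodesic curvature then depends monotonically on the metric coefficient through the factor $\partial_r f / f = (\log f)'$ appearing in the radial term, and one checks that $\widehat f \le f$ together with $\partial_r \widehat f \ge \partial_r f$ (both consequences of the Jacobi comparison) drive $\widehat k \le k$ pointwise along $\Gamma$.

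The main obstacle I anticipate is the geodesic-curvature computation itself: one must derive the correct expression for $k$ in geodesic polar coordinates, keep careful track of signs and of the \emph{inward} orientation of the normal, and verify that the monotone dependence on $(\log f)'$ really produces the asserted inequality $\widehat k \le k$ rather than its reverse. A secondary technical point is handling the piecewise-$C^2$ and merely Lipschitz regularity of $\Gamma$: the comparison should be carried out on each smooth arc where $k$ is defined classically, and the singular point $p$ must be excluded, with the coordinate patch shown to cover a neighborhood of $\Gamma$ cleanly under the no-conjugate-points assumption. Once the coordinate formula for $k$ is in hand, the inequality follows from the Jacobi comparison almost immediately, so the entire weight of the argument rests on setting up those coordinates and the curvature formula correctly.
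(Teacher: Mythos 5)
Your overall strategy is the same as the paper's: work along the common radial geodesics, compare the Jacobi coefficients $f$ and $\widehat f$ of the two metrics, and transfer the comparison to $\Gamma$ using the fact that arc length, and hence the angle $\phi$ between $\Gamma$ and the radial direction, is the same for both metrics (via a Liouville-type formula $k = d\phi/ds + (\partial_r f/f)\sin\phi$). However, the key comparison step as you state it is wrong. The inequality $\partial_r\widehat f \ge \partial_r f$ is \emph{not} a consequence of the Jacobi comparison; the opposite holds. Take $K=0$, $\widehat K = 1$: then $f(r)=r$, $\widehat f(r)=\sin r$, and $\partial_r\widehat f=\cos r<1=\partial_r f$. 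Worse, if your two claimed inequalities $\widehat f\le f$ and $\partial_r\widehat f\ge\partial_r f$ did both hold, they would give $\partial_r\widehat f/\widehat f\ \ge\ \partial_r f/f$, and since the inward geodesic curvature of $\Gamma$ is monotone \emph{increasing} in the factor $(\log f)'=\partial_r f/f$ (it enters the Liouville formula multiplied by $\sin\phi\ge 0$), this would yield $\widehat k\ge k$ --- the reverse of the statement. The two facts that are actually true, $\widehat f\le f$ and $\partial_r\widehat f\le\partial_r f$, do not by themselves determine the sign of $\partial_r\widehat f/\widehat f-\partial_r f/f$, so you cannot reach the conclusion by bounding numerator and denominator separately.

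The repair, which is what the paper does, is to compare the logarithmic derivative directly. Set $k_0=f'/f$ and $\widehat k_0=\widehat f'/\widehat f$, the inward curvatures of the geodesic circles about $p$. The Jacobi equations give the Riccati equations $k_0'+k_0^2=-K$ and $\widehat k_0'+\widehat k_0^{\,2}=-\widehat K$, whence $(k_0-\widehat k_0)'+(k_0+\widehat k_0)(k_0-\widehat k_0)=\widehat K-K\ge 0$ with $k_0-\widehat k_0\to 0$ as $r\to 0$ (both behave like $1/r$ there, for any value of $f'(0)$); a Gronwall argument then gives $\widehat k_0\le k_0$ all along each radial geodesic up to $\Gamma$. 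Feeding this single inequality into the Liouville formula, with $d\phi/ds$ and $\sin\phi$ identical for the two metrics, yields $\widehat k\le k$. Note also that the Riccati route avoids your normalization $\partial_r f(0,\theta)=1$, which is delicate here because the metrics are allowed to be singular (conical) at $p$ and nothing in the hypotheses forces the two metrics to assign the same angles at $p$; the paper instead normalizes the Jacobi fields at the boundary point $q$, where the two arc-length elements of $\Gamma$ agree, and this sidesteps the issue.
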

\begin{proof}
As in Proposition 4 of \cite{CG}, we consider normal Jacobi fields
$V$ and $\widehat V$ along a unit-speed geodesic $\gamma(t)$ from
$p = \gamma(0)$ to $q \in \Gamma$, such that
$V(p) = \widehat V(p)=0$ and $V(q) = \widehat V(q)$. This
construction is possible since there are no conjugate points along
$\gamma$ for either metric. Write $f(t) = |V(\gamma(t))|_g$ and
$\widehat f(t) = |\widehat V(\gamma(t))|_{\widehat g}$: both are
positive except at $t=0$. At $q \in \Gamma$, the metrics are the
same, so we have $|V(q)|_g = |\widehat V(q)|_{\widehat g} = 1.$
Then the inward curvatures $k_0$ and $\widehat k_0$ of the
geodesic circle
centered at $p$ satisfy
$$ k_0(\gamma(t))= \frac{f'(t)}{f(t)} {\rm \quad and \quad}
\widehat k_0(\gamma(t))= \frac{\widehat f'(t)}{\widehat f(t)}. $$
The Jacobi equation $f''(t)+K(\gamma(t))f(t)=0$ implies the Riccati
equation $k_0'+k_0^2 = -K$, and similarly
$\widehat k_0'+{\widehat k_0}^2 = -\widehat K$. Since
$K\leq \widehat K$, these imply
$$ (k_0-\widehat k_0)' + (k_0+\widehat k_0)(k_0-\widehat k_0) \geq
0,$$
with $k_0-\widehat k_0 \to 0$ at the point $\gamma(t)$ as $t\to 0$.
Therefore
$k_0 \geq \widehat k_0$ everywhere along $\gamma$. Observe that
the angles formed by $\Gamma$ and $\gamma$ with respect to $g$ and
$\widehat g$ are the same. It now follows that the inward geodesic
curvatures $k$ and $\widehat k$ of $\Gamma$ with respect to $g$ and
$\widehat g$ satisfy $\widehat k \leq k$. (See the proof of Proposition
4 of \cite{CG}.)
\end{proof}

%
% Prop 3.5
\begin{prop}\cite[Proposition 5]{CG} \label{prop:CG2}
Let $\Gamma$ be a $C^{2}$ curve in $M$ and let $C$ be the cone
$p\cone\Gamma$. If $\widehat{C}$ is the cone $C$ equipped with the
constant curvature metric $\widehat{g}$, as in Definition
\ref{eq:q} above, then $\Theta(C,p)\leq\Theta(\widehat{C},p)$ and
$\area(C)\leq\area(\widehat{C})$.
\end{prop}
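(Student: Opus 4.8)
The plan is to reduce both inequalities to a single one-dimensional Sturm comparison for the metric coefficient of the cone written in geodesic polar form. First I would set up adapted coordinates $(u,s)$ on $C=p\cone\Gamma$, where $s$ is the arclength parameter of $\Gamma$ and $u\in[0,r(s)]$ is the distance from $p$ along the radial geodesic $u\mapsto\exp_p(u\,e(s))$ to the point $q(s)\in\Gamma$; here $e(s)\in T_pM$ is the unit initial direction and $r(s)=\dist(p,q(s))$. Writing $F(u,s)=\exp_p(u\,e(s))$, the field $\partial_sF=:J(u;s)$ is the Jacobi field along the radial geodesic with $J(0;s)=0$ and covariant derivative $e'(s)$ at $u=0$. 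Since $|e(s)|\equiv1$ forces $\langle e,e'\rangle=0$, a short computation with the Jacobi equation shows $\langle J,\partial_uF\rangle\equiv0$, so $J$ is a \emph{normal} Jacobi field and the induced metric takes the form $du^2+\phi(u,s)^2\,ds^2$ with $\phi:=|J|$. The boundary datum is fixed by the construction of Definition \ref{eq:q}: since $q'(s)=r'(s)\,\partial_uF+J$ has unit length, $\phi(r(s),s)=\sqrt{1-r'(s)^2}=:\tau(s)$, and because $\widehat\Gamma$ is traced isometrically at the same distance $r(s)$ from $\widehat p$, the coefficient $\widehat\phi$ of $\widehat C$ satisfies the identical boundary conditions $\widehat\phi(0,s)=0$ and $\widehat\phi(r(s),s)=\tau(s)$.

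The heart of the matter is the differential inequality $\phi_{uu}\ge\kappa^2\phi$. I would obtain it directly from the Jacobi equation $J''=-R(J,\partial_uF)\partial_uF$: differentiating $\phi=|J|$ twice and using Cauchy--Schwarz on the term $|J'|^2|J|^2-\langle J,J'\rangle^2\ge0$ gives $\phi_{uu}\ge -K(J,\partial_uF)\,\phi\ge\kappa^2\phi$, the last step using $K_M\le-\kappa^2$. On the constant-curvature model $\widehat C$ one has the equality $\widehat\phi_{uu}=\kappa^2\widehat\phi$, so $\widehat\phi(u,s)=\tau(s)\sinh(\kappa u)/\sinh(\kappa r(s))$ (and $\widehat\phi=\tau(s)\,u/r(s)$ when $\kappa=0$). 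Setting $w=\phi-\widehat\phi$, we get $w_{uu}\ge\kappa^2 w$ on $[0,r(s)]$ with $w=0$ at both endpoints; the maximum principle for the operator $\partial_{uu}-\kappa^2$ then yields $w\le0$, that is $\phi\le\widehat\phi$ throughout the cone.

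The two assertions are now immediate. Integrating $\phi\le\widehat\phi$ over $\{(u,s):0\le u\le r(s)\}$ gives $\area(C)=\int_0^L\!\int_0^{r(s)}\phi\,du\,ds\le\int_0^L\!\int_0^{r(s)}\widehat\phi\,du\,ds=\area(\widehat C)$. For the density, note that the radial geodesics are minimizing (Cartan--Hadamard), so $C\cap B_\varepsilon(p)=\{u\le\varepsilon\}$ and $\area(C\cap B_\varepsilon(p))=\int_0^L\!\int_0^\varepsilon\phi\,du\,ds=\tfrac{\varepsilon^2}{2}\int_0^L\phi_u(0,s)\,ds+o(\varepsilon^2)$, whence $\Theta(C,p)=\tfrac1{2\pi}\int_0^L\phi_u(0,s)\,ds$, and likewise for $\widehat C$. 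Since $\phi\le\widehat\phi$ with $\phi(0,s)=\widehat\phi(0,s)=0$ forces $\phi_u(0,s)\le\widehat\phi_u(0,s)$, the density inequality $\Theta(C,p)\le\Theta(\widehat C,p)$ follows. The main obstacle is the curvature comparison $\phi_{uu}\ge\kappa^2\phi$: it is where the hypothesis $K_M\le-\kappa^2$ enters, and it must be argued at the level of the normal Jacobi field rather than the intrinsic Gauss curvature of the cone, since the faces of $C$ need not be totally geodesic. Alternatively one could compare total curvatures through the Gauss--Bonnet theorem together with the geodesic-curvature inequality $\widehat k\le k$ of Proposition \ref{prop:CG1}, but the area and density terms there compete, and the Sturm comparison is more transparent.
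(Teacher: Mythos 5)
Your argument is correct and complete: the reduction to the Sturm comparison $\phi_{uu}\ge\kappa^2\phi$ versus $\widehat\phi_{uu}=\kappa^2\widehat\phi$ with matching boundary data at $u=0$ and $u=r(s)$, followed by integration in $u$ for the area and differentiation at $u=0$ for the density, is exactly the standard route. The paper itself supplies no proof here --- it imports the statement verbatim as Proposition 5 of \cite{CG} --- and your Jacobi-field comparison is essentially the argument of that cited source (and the same Jacobi/Riccati machinery the paper does spell out in Proposition \ref{prop:CG1}), so nothing further is needed.
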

Now we prove the following Gauss-Bonnet formula for
two-dimensional cones in a nonpositively curved manifold.
%
% Prop 3.6
\begin{prop}[Gauss-Bonnet formula] \label{prop:GB for widehat of C}
Let $p$ be a point in $M\setminus\Gamma$ for a graph
$\Gamma$
with edges $c_1,\dots,c_m$.  Let 
$\widehat{C}=(C, \widehat{g})$ for $C=p\cone\Gamma$.  Then we have
$$2\pi\Theta(\widehat{C},p)+
\kappa^{2} \area(\widehat{C})=
-\sum_{i=1}^m\int_{\widehat{c_i}} \widehat{k}\, ds +
\sum_{i=1}^m\sum_{j=0,1}\Big(\frac{\pi}{2}-
\angle_{q^i_j}(T_{i}(q^i_j),\overline{q^i_j p})\Big),$$
where $\widehat{k}$ is the geodesic curvature of $\Gamma$ in
$\widehat{C}$ and
$\angle_{q^i_j}(T_{i}(q^i_j),\overline{q^i_j p})$ is
the angle at $q_j^i$ between the tangent vector $T_{i}(q^i_j)$
to $c_i$ and the geodesic $\overline{q^i_j p}$.
\end{prop}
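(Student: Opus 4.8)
The plan is to apply the classical Gauss--Bonnet theorem to each of the geodesic triangles making up $\widehat{C}$ and then to sum, handling the apex $p$ through the elementary relation between cone angle and density. Write $\widehat{C}$ as the union of the faces $A_i = p\cone\widehat{c_i}$, $1\le i\le m$, each carrying the constant-curvature metric $\widehat{g}$ constructed above. By Definition \ref{eq:q} every $A_i$ is a topological disk of constant Gauss curvature $-\kappa^2$, bounded by the two radial geodesics $\overline{p\,q^i_0}$, $\overline{q^i_1\,p}$ and the base arc $\widehat{c_i}$; since the radial sides are genuine geodesics of $\widehat{g}$, only $\widehat{c_i}$ contributes to the boundary curvature integral.

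First I would read off the three corner angles of $A_i$. At $p$ the interior angle is the vertex angle $\alpha_i$ of the face, so the exterior angle is $\pi-\alpha_i$. At each base corner $q^i_j$ the tangent vector $T_{i}(q^i_j)$ points into $c_i$, i.e.\ along the side $\widehat{c_i}$ of the triangle, so the interior angle there is exactly $\angle_{q^i_j}(T_{i}(q^i_j),\overline{q^i_j p})$ and the exterior angle is its supplement. Gauss--Bonnet for $A_i$ then reads
\[
-\kappa^2\area(A_i)-\int_{\widehat{c_i}}\widehat{k}\,ds+(\pi-\alpha_i)+\sum_{j=0,1}\Big(\pi-\angle_{q^i_j}(T_{i}(q^i_j),\overline{q^i_j p})\Big)=2\pi,
\]
with $\widehat{k}$ measured against the unit normal pointing away from $p$ --- the sign convention forced by the statement.

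Next I would solve this identity for the angle defects and sum over $i=1,\dots,m$. Two facts finish the computation: $\sum_i\area(A_i)=\area(\widehat{C})$, and $\sum_i\alpha_i=2\pi\,\Theta(\widehat{C},p)$. The second says that the total cone angle at the apex equals $2\pi$ times the density; it holds because near $p$ the metric $\widehat{g}$ is an honest constant-curvature cone whose sector $A_i$ has area $\alpha_i\,\frac{\cosh\kappa\varepsilon-1}{\kappa^2}$ inside $B_\varepsilon(p)$, so that $\area(\widehat{C}\cap B_\varepsilon(p))/(\pi\varepsilon^2)\to\frac{1}{2\pi}\sum_i\alpha_i$. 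Substituting these two identities, and combining the constant terms via $\sum_i\sum_{j=0,1}\tfrac{\pi}{2}=m\pi$ so that $-m\pi+\sum_{i,j}\angle_{q^i_j}(T_{i}(q^i_j),\overline{q^i_j p})=-\sum_{i,j}\big(\tfrac{\pi}{2}-\angle_{q^i_j}(T_{i}(q^i_j),\overline{q^i_j p})\big)$, yields precisely the asserted formula.

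The one genuine obstacle is the singular apex $p$, where classical Gauss--Bonnet does not apply directly; the face-by-face decomposition sidesteps this provided the cone-angle/density identity above is verified. A feature special to graphs is that, because each vertex of $\Gamma$ has valence at least $2$, several faces may meet along a single radial geodesic $\overline{pq}$, so that $\widehat{C}$ branches there. This causes no difficulty: a radial geodesic contributes zero boundary curvature however many faces share it, and area is additive over the $A_i$, so both $\sum_i\area(A_i)=\area(\widehat{C})$ and $\sum_i\alpha_i=2\pi\Theta(\widehat{C},p)$ persist. Alternatively one could excise $B_\varepsilon(p)$ and let $\varepsilon\to0$, whereupon the inner-boundary curvature integral supplies the $2\pi\Theta(\widehat{C},p)$ term, exactly as the $\partial B_\varepsilon(p)$ term did in Proposition \ref{prop:density in M}. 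The remaining care points are purely a matter of orientation: identifying $\angle_{q^i_j}(T_{i}(q^i_j),\overline{q^i_j p})$ with the triangle's interior angles and taking the outward normal for $\widehat{k}$ are what produce the minus sign in front of $\sum_i\int_{\widehat{c_i}}\widehat{k}\,ds$.
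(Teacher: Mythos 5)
Your proof is correct, and it takes a genuinely different route from the paper's. The paper first establishes the formula for a smooth closed (possibly non-simple) curve by excising $B_\varepsilon(p)$, applying Gauss--Bonnet to the resulting annulus (Euler characteristic $0$), and letting $\varepsilon\to 0$ so that the inner boundary integral produces the $2\pi\Theta(\widehat{C},p)$ term; it then handles a general graph by invoking Euler's theorem (Lemma \ref{Euler}) to realize the double cover $\Gamma'$ as a piecewise-smooth immersed circle, applying the closed-curve formula to $\Gamma'$ with exterior-angle corrections at the vertices, rewriting each exterior angle as a sum of two terms $\frac{\pi}{2}-\angle_{q}(T,\overline{qp})$, and finally dividing by two. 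You instead decompose $\widehat{C}$ directly into the $m$ constant-curvature fans $A_i=p\cone\widehat{c_i}$ and apply the classical Gauss--Bonnet theorem with corners to each fan, exploiting that the radial sides are $\widehat{g}$-geodesics and that the apex $p$ is merely a corner of each individual fan (the metric singularity of $\widehat{C}$ appears only upon gluing). The one ingredient you need beyond classical Gauss--Bonnet is the identity $\sum_i\alpha_i=2\pi\Theta(\widehat{C},p)$, which you justify correctly via the area expansion of a hyperbolic sector; your sign conventions and the resulting per-face identity $\alpha_i+\kappa^2\area(A_i)=-\int_{\widehat{c_i}}\widehat{k}\,ds+\sum_{j}(\frac{\pi}{2}-\angle_{q^i_j})$ sum to exactly the stated formula, with each vertex of $\Gamma$ counted once per incident edge-end, as in the statement. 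Your approach buys a shorter argument with no double-cover bookkeeping and no $\varepsilon\to 0$ limit; the paper's approach buys reuse of the machinery already set up in Proposition \ref{prop:density in M} and of the Euler-circuit construction already used to define $\widehat{g}$ on a graph cone. You also correctly observe that branching of $\widehat{C}$ along a radial geodesic at a vertex of higher valence causes no trouble, since the faces are treated individually and both area and apex angle are additive over them.
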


\begin{proof}
Suppose first that $\Gamma$ is a smooth closed curve in $M^n$ but
not necessarily simple, and $p$ is a point of
$M^n \setminus \Gamma$. Choose a sufficiently small
$\varepsilon>0$ such that $\overline{B_{\varepsilon}(p)}$ does not
intersect $\Gamma$. Let
$A=\widehat C\setminus B_{\varepsilon}(p)$ be the annular
region between $\Gamma$ and
$\partial B_{\varepsilon}(p)\cap \widehat C$. The Gauss-Bonnet
formula says that
$$\int_{A}\widehat K\,dA-
\int_{\Gamma}\overrightarrow{k}\cdot\nu_{C}\,ds
-\int_{\partial B_{\varepsilon}(p)\cap
(p\cone\Gamma)}\overrightarrow{k}\cdot\nu_{\widehat C}\,ds
=2\pi\,\chi(A),$$
where $\chi(A)$ is the Euler characteristic of $A$ and
$\widehat K=-\kappa^2$ is the intrinsic Gauss curvature of
$\widehat C$.
Since $A$ is an immersed annulus, we have $\chi(A)=0$.
Along $\partial B_{\varepsilon}(p)\cap \widehat C$, we have
$\nu_{\widehat C}=-\overline{\nabla}r$ and
$\overrightarrow{k}\cdot\nu_{\widehat C}\equiv 
\widehat k_0(\varepsilon)$,
where
$k_0(\varepsilon)=\kappa\cot\kappa\varepsilon$ is its geodesic
curvature,
as in Proposition \ref{prop:CG1}, and $r$ is the distance from
$\widehat p$ in $M$.  Note that
\begin{eqnarray*}\label{eq:a}
\lim_{\varepsilon\rightarrow0}\int_{\partial B_{\varepsilon}(p)\cap
\widehat C}\overrightarrow{k}\cdot\nu_{\widehat C}
 &=&\lim_{\varepsilon\rightarrow0}
\widehat k_0(\varepsilon)\,\length(\partial B_{\varepsilon}(p)\cap
\widehat C)\nonumber\\
 &=&2\pi\Theta(\widehat C,p).\nonumber\\
\end{eqnarray*}
 Letting
$\varepsilon \rightarrow 0$, we get
%
% eqn 3.5
\begin{equation}\label{GB for C hat}
-\kappa^2 \area(\widehat C)
-\int_{\Gamma}\overrightarrow{k}\cdot\nu_{\widehat C}
-2\pi\Theta(\widehat C,p)=0.
\end{equation}

Finally, in the general case of of a {\em graph} $\Gamma\subset M^n$,
we apply Lemma \ref{Euler} to show that the double cover of
$\Gamma$ can be considered as a piecewise smooth immersion
$\Gamma'$ of $\mathbb{S}^1$.

Since $\Gamma'$ is a piecewise-smooth immersion of the circle, we
may apply equation \eqref{GB for C hat} to $\Gamma'$, and conclude
that
$$\kappa^2\area({\widehat C}')+
\int_{\Gamma'}\overrightarrow{k}\cdot\nu_{C}\,ds+
2\pi\Theta({\widehat C}',p)=0.
$$

Let $q^i_0$, $q^i_1$ be the end points of the smooth segment
$c_i$ for $i=1,\cdots,m$. We denote $q^i_j\sim q^k_\ell$
if they represent the same point of $M$ where $c_i$ and $c_k$ meet.
Denote $A'_j = \widehat{p}\cone{c}'_j$ with the
metric of constant curvature. Then we see that
$\widehat{C}'=\cup_{j=1}^{2m} A'_j =\widehat{p}\cone \Gamma'$.

Similar arguments as in the above smooth case show that
\begin{equation}\label{big3}
2\pi\Theta(\widehat C',p)=-\kappa^2\area(A)-
\sum_{i=1}^{m}\int_{c_{i}} \overrightarrow{k}\cdot\nu_{c_{i}}\,ds
+\sum_{i=1}^{m}\sum_{\ell=0,1}(\frac{\pi}{2}-
\angle_{q^{i}_{\ell}}(T_{i}(q^{i}_{\ell}),\overline{q^{i}_{\ell}p})).
\end{equation}

Indeed, the last term is equal to the sum over vertices of the sum
of the exterior angles of the piecewise smooth curve  $\Gamma'$ at
the vertex $q^{i}_{1}\sim q^{k}_{0}$.  To see this, suppose that
$c_{i}$ and $c_{k}$ are the consecutive edges in $\Gamma$ joined
at $q^{i}_{1}\sim q^{k}_{0}$; then
$$\Big(\frac{\pi}{2}-
\angle_{q^i_1}(T_i(q^i_1),\overline{q^i_1 p})\Big)+
\Big(\frac{\pi}{2}-
\angle_{q^{k}_{0}}(T_{k}(q^{k}_{0}),\overline{q^k_0 p})\Big)$$
$$= \pi-
\Big(\angle_{q^i_1}(T_i(q^i_1),\overline{q^i_1 p}) +
\angle_{q^k_0}(T_{k}(q^k_0),\overline{q^k_0 p})\Big)$$
$$= \pi-\angle_{q^i_1}(T_i(q^i_1),T_{k}(q^k_0)),$$
which is the exterior angle between $c_i$ and $c_k$ at
$q^i_1 \sim q^k_0$, relative to the constant-curvature cone
$\widehat C'$.

Therefore
\begin{eqnarray} \label{eqn:GB for widehat C}
2\pi\Theta(\widehat{C}',p)=
-\kappa^2\area(\widehat{C}')&-&\sum_{i=1}^{2m}\int_{c'_i}
\overrightarrow{k}\cdot\nu_{\widehat{c}'_{i}}\,ds \\
&+&\sum_{i=1}^{2m}\sum_{j=0,1}\Big(\frac{\pi}{2}-\angle_{q^{i}_{j}}(T_{i}(q^{i}_{j}),\overline{q^{i}_{j}p})\Big).
\nonumber
\end{eqnarray}
 For the edges $c_{i_1}$ and
$c_{i_2}$ which represent the same edge $c_i$ of $\Gamma$, we have
\begin{eqnarray*}
\int_{\widehat{c}_{i}} \overrightarrow{k}\cdot\nu_{\widehat{C}}ds=
\int_{\widehat{c}_{i_1}}
\overrightarrow{k}\cdot\nu_{\widehat{C}}ds
=\int_{\widehat{c}_{i_2}}
\overrightarrow{k}\cdot\nu_{\widehat{C}}ds.
\end{eqnarray*}
We also note that the exterior angle appears twice and
$\area(\widehat{p}\cone \widehat{\Gamma}'
)=2\area(\widehat{p}\cone \widehat{\Gamma})$. Therefore by
dividing both sides of equation \eqref{eqn:GB for widehat C}
by two, we obtain
$$2\pi\Theta(\widehat{C},p)+\kappa^{2}\area(\widehat{C})=
-\sum_{i=1}^m \int_{\widehat{c}_{i}}
\overrightarrow{k}\cdot\nu_{\widehat{c}_{i}}ds
+\sum_{i=1}^m\sum_{j=0,1}\Big(\frac{\pi}{2}
-\angle_{q^{i}_{j}}(T_{i}(q^{i}_{j}),\overline{q^{i}_{j}p})\Big),$$
which completes the proof.
\end{proof}
% Def 3.7
\begin{df}
{\rm Define the} minimum cone area $\mathcal{{A}}(\Gamma)$ {\rm of a graph}
$\Gamma \subset M$ {\rm as}
$$\mathcal{{A}}(\Gamma):= \inf_{p\in \cv(\Gamma)}\area({p\cone\Gamma})$$
\end{df}
% Thm 3.8
\begin{thm} \label{thm:3pi}
Let $M$ be an $n$-dimensional complete simply connected Riemannian
manifold whose sectional curvature is bounded above by a
nonpositive constant $-\kappa^2$. Let $\Gamma$ be a graph in $M$
with $TC(\Gamma)\leq 3\pi+\kappa^{2}\mathcal{{A}}(\Gamma)$ and let
$\Sigma \in \mathcal{S}_\Gamma$ be a strongly stationary surface
with respect to $\Gamma$ in $M$. Then $\Sigma$ is either an
embedded surface or a subset of the $Y$-singularity cone with
totally geodesic faces having constant Gauss curvature $-\kappa^2$.
\end{thm}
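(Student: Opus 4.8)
The plan is to reduce the theorem to the pointwise density estimate
$$2\pi\,\Theta(\Sigma,p)\ \le\ TC(\Gamma)-\kappa^2\,\area(p\cone\Gamma)\qquad(p\in\Sigma\setminus\Gamma),$$
and then to read off the conclusion from the numerical bound $\Theta(\Sigma,p)\le C_Y=\tfrac32$ together with the rigidity in the equality case of Proposition \ref{prop:density in M}. Fix $p\in\Sigma\setminus\Gamma$, write $C=p\cone\Gamma$ and $\widehat C=(C,\widehat g)$. Proposition \ref{prop:density in M} gives the monotonicity $\Theta(\Sigma,p)\le\Theta(\widehat C,p)$, with strict inequality unless $\Sigma$ is a cone over $p$ with totally geodesic faces of constant Gauss curvature $-\kappa^2$.

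To estimate $\Theta(\widehat C,p)$ I would feed it into the Gauss--Bonnet identity of Proposition \ref{prop:GB for widehat of C} and bound the right-hand side term by term. The angle sum there uses exactly the direction $e=\overline{q p}$ at each vertex $q$, so it is dominated by $\sum_{q\in V(\Gamma)}tc(q)$ by the very definition of $tc(q)$ as a supremum over unit directions $e$. For the curvature term, the point is that $-\sum_i\int_{\widehat{c_i}}\widehat k\,ds$ is the integral of the \emph{inward} geodesic curvature of $\Gamma$ in $\widehat C$; since the induced cone metric $g$ is ruled by geodesics from $p$, the Gauss equation makes its intrinsic curvature satisfy $K_g\le-\kappa^2=\widehat K$, so Proposition \ref{prop:CG1} applies and yields $\widehat k_{\rm in}\le k_{\rm in}=\overrightarrow{k}\cdot\nu_{\rm in}\le|\overrightarrow{k}|$ pointwise, hence $-\sum_i\int_{\widehat{c_i}}\widehat k\,ds\le\int_{\Gamma^{\rm reg}}|\overrightarrow{k}|\,ds$. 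Finally Proposition \ref{prop:CG2} gives $\area(C)\le\area(\widehat C)$, so that $-\kappa^2\area(\widehat C)\le-\kappa^2\area(C)$. Assembling the three bounds produces the displayed density estimate.

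Since a strongly stationary $\Sigma$ lies in the geodesic convex hull $\cv(\Gamma)$ by the maximum principle, we have $p\in\cv(\Gamma)$ and therefore $\area(p\cone\Gamma)\ge\mathcal A(\Gamma)$; combined with the hypothesis $TC(\Gamma)\le3\pi+\kappa^2\mathcal A(\Gamma)$ this gives
$$2\pi\,\Theta(\Sigma,p)\le TC(\Gamma)-\kappa^2\mathcal A(\Gamma)\le3\pi,\qquad\text{so }\ \Theta(\Sigma,p)\le\tfrac32$$
at every interior point. Now I would use the junction structure of $\Sigma\in\mathcal S_\Gamma$ imposed by the balancing condition \eqref{balancing}: along an interior singular curve the density equals half the number of half-sheets, and balancing excludes the case of exactly two half-sheets (they would join smoothly, with density $1$), so a genuine interior singularity carries density at least $\tfrac32$. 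Consequently, if $\Theta(\Sigma,p)<\tfrac32$ everywhere then $\Theta\equiv1$ and $\Sigma$ is a smooth embedded surface; whereas if $\Theta(\Sigma,p)=\tfrac32$ at some point, then every inequality above is forced to be an equality, so the equality case of Proposition \ref{prop:density in M} makes $\Sigma$ a cone over $p$ with totally geodesic faces of constant curvature $-\kappa^2$, and $\Theta=\tfrac32=C_Y$ identifies it as the $Y$-singularity cone.

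The main obstacle is twofold. The analytic care lies in the sign bookkeeping of the curvature term: one must check that the quantity appearing in Gauss--Bonnet really is the inward geodesic curvature to which Proposition \ref{prop:CG1} applies, and that the cone metric satisfies $K_g\le-\kappa^2$, so that the comparison runs in the direction giving an \emph{upper} bound by $\int|\overrightarrow{k}|\,ds$. The structural difficulty is the passage from $\Theta\le\tfrac32$ to the dichotomy ``embedded or $Y$-cone'': it rests on the lower density gap for soap-film singularities and on the rigidity in the equality case of the monotonicity formula, and one must ensure these hold for a merely strongly stationary (not minimizing) $\Sigma$, using only the structure of a finite union of smooth minimal pieces meeting with balancing.
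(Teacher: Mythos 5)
Your proposal is correct and follows essentially the same route as the paper: the density comparison of Proposition \ref{prop:density in M}, the Gauss--Bonnet identity of Proposition \ref{prop:GB for widehat of C}, and the curvature and area comparisons of Propositions \ref{prop:CG1} and \ref{prop:CG2}, assembled into the estimate $2\pi\Theta(\Sigma,p)\le TC(\Gamma)-\kappa^{2}\area(p\cone\Gamma)\le 3\pi$ for $p$ in the convex hull of $\Gamma$. Your sign bookkeeping for the inward geodesic curvature and your justification of the final dichotomy (balancing excludes exactly two half-sheets, so a genuine singularity forces density at least $\tfrac32$) are sound and in fact slightly more explicit than the paper's own proof.
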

\begin{proof}
For a point $p$ on $\Sigma \setminus \Gamma$, Proposition
\ref{prop:density in M} says that
\begin{eqnarray*}\label{eq:b}
2\pi\Theta(\Sigma,p) &\leq& 2\pi\Theta(\widehat{C},p).
\end{eqnarray*}
Applying Proposition \ref{prop:CG1}, Proposition \ref{prop:CG2},
and Proposition \ref{prop:GB for widehat of C}, we have
\begin{eqnarray*}%\label{eq:b}
2\pi\Theta(\widehat{C},p) &=& -\kappa^{2}
\area(\widehat{C})-\int_{\Gamma} \widehat{k}\, ds +
\sum_{i =1}^{m}\sum_{j=0,1}\Big(\frac{\pi}{2}-
\angle_{q^{i}_{j}}(T_{i}(q^{i}_{j}),\overline{q^{i}_{j}p})\Big)\\
&\leq& -\kappa^{2} \area(p\cone \Gamma)-\int_{\Gamma} {k}\, ds
+\sum_{i=1}^{m}\sum_{j=0,1}\Big(\frac{\pi}{2}-
\angle_{q^{i}_{j}}(T_{i}(q^{i}_{j}),\overline{q^{i}_{j}p})\Big)\\
&\leq& TC(\Gamma)-\kappa^{2}\area(p\cone\Gamma).
\end{eqnarray*}
Therefore the assumption on the cone total curvature of $\Gamma$
implies that
\begin{eqnarray*}
2\pi\Theta(\Sigma,p) \leq 3\pi.
\end{eqnarray*}
If $\Theta(\Sigma,p)<\frac{3}{2}$ for any $p\in \Sigma \setminus \Gamma$, then $\Sigma$ is an embedded
surface. Otherwise, we have $\Theta(\Sigma,p)=\frac{3}{2}$ which
means that $\Sigma$ is a cone with vertex $p$ and totally geodesic
faces. Since the only stationary cone with density $\frac{3}{2}$
is the $Y$-cone, $\Sigma$ is a subset of the $Y$-cone.
\end{proof}
\begin{thm}\label{thm:2pi}
Let $M^3$ be a $3$-dimensional complete simply connected
Riemannian manifold whose sectional curvature is bounded above by
a nonpositive constant $-\kappa^2$. Let $\Sigma \in
\mathcal{S}_{\Gamma}$ be embedded as an
$(\mathbf{M},0,\delta)$-minimizing set with respect to a graph
$\Gamma$ in $M^3$. If $TC(\Gamma)\leq 2\pi
C_{T}+\kappa^{2}\mathcal{{A}}(\Gamma)$, then $\Sigma$ can have
only $Y$ singularities unless $\Sigma$ is a subset of the $T$
stationary cone with totally geodesic faces.
\end{thm}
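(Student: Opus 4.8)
The plan is to reuse, almost verbatim, the density estimate from the proof of Theorem~\ref{thm:3pi}, but with the sharper curvature bound built into the hypothesis, and then to feed the resulting pointwise inequality $\Theta(\Sigma,p)\le C_T$ into Taylor's classification of singularities (Theorems~\ref{Taylor:cone} and~\ref{Taylor:regularity}). The only genuinely new ingredient is a rigidity argument at the borderline density $C_T$.

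First I would fix $p\in\Sigma\setminus\Gamma$. An embedded $(\mathbf{M},0,\delta)$-minimizing set with respect to $\Gamma$ is in particular strongly stationary with respect to $\Gamma$ (minimizing forces nonnegative first variation and the variational boundary condition of Definition~\ref{def:variational boundary}), so Proposition~\ref{prop:density in M} applies and gives $\Theta(\Sigma,p)\le\Theta(\widehat{C},p)$. Moreover the standard convex-hull property in nonpositive curvature places $p$ in $\cv(\Gamma)$, whence $\area(p\cone\Gamma)\ge\mathcal{A}(\Gamma)$. Combining Propositions~\ref{prop:density in M}, \ref{prop:CG1}, \ref{prop:CG2} with the Gauss--Bonnet formula of Proposition~\ref{prop:GB for widehat of C}, exactly as in the proof of Theorem~\ref{thm:3pi}, and using $-\int_\Gamma k\,ds\le\int_\Gamma|\overrightarrow{k}|\,ds$ together with the angle bound $\sum_{i,j}(\frac{\pi}{2}-\angle_{q^i_j}(T_i(q^i_j),\overline{q^i_j p}))\le\sum_{q}tc(q)$, one obtains the chain
$$2\pi\Theta(\Sigma,p)\le 2\pi\Theta(\widehat{C},p)\le TC(\Gamma)-\kappa^2\area(p\cone\Gamma)\le TC(\Gamma)-\kappa^2\mathcal{A}(\Gamma)\le 2\pi C_T,$$
so that $\Theta(\Sigma,p)\le C_T$ at every interior point.

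Next I would invoke Theorems~\ref{Taylor:cone} and~\ref{Taylor:regularity}: in $M^3$ the tangent cone at any interior singular point of $\Sigma$ is area minimizing, hence is either the $Y$-cone or the $T$-cone, and $\Theta(\Sigma,p)$ equals its vertex density, namely $C_Y=\tfrac{3}{2}$ or $C_T$. Since $\tfrac{3}{2}<C_T$ and the estimate above caps the density at $C_T$, both singularity types are \emph{a priori} admissible. If $\Theta(\Sigma,p)<C_T$ for every $p\in\Sigma\setminus\Gamma$, then no $T$-singularity can occur and $\Sigma$ has only $Y$-singularities, which is the first alternative of the conclusion.

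The remaining step, which I expect to be the main obstacle, is the rigidity at the borderline. Suppose $\Theta(\Sigma,p_0)=C_T$ for some $p_0$. Then every inequality in the displayed chain is an equality; in particular $\Theta(\Sigma,p_0)=\Theta(\widehat{C},p_0)$, which is precisely the equality case of Proposition~\ref{prop:density in M} and therefore forces $\Sigma$ to be a cone over $p_0$ with totally geodesic faces of constant Gauss curvature $-\kappa^2$. A cone coincides with its own tangent cone at its vertex, and that tangent cone is an area-minimizing cone of density $C_T$, hence the $T$-cone by Theorem~\ref{Taylor:cone}. Consequently $\Sigma$ is a subset of the $T$ stationary cone with totally geodesic faces. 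The delicate points to verify carefully are this propagation of equality back through Proposition~\ref{prop:density in M}, the uniqueness (via Theorem~\ref{Taylor:cone}) of the $T$-cone as the only minimizing cone of density $C_T$, and the initial claim that the $(\mathbf{M},0,\delta)$-minimizing hypothesis indeed supplies the strong stationarity required to apply Proposition~\ref{prop:density in M}.
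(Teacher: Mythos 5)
Your proposal is correct and follows essentially the same route as the paper: the paper's proof simply reruns the density chain from Theorem~\ref{thm:3pi} to get $2\pi\Theta(\Sigma,p)\le 2\pi\Theta(\widehat{C},p)\le TC(\Gamma)-\kappa^2\area(p\cone\Gamma)\le 2\pi C_T$ and then invokes Theorem~\ref{Taylor:cone} to classify the possible tangent cones. Your added discussion of the borderline case $\Theta(\Sigma,p_0)=C_T$ via the equality clause of Proposition~\ref{prop:density in M} is exactly the (unstated) justification behind the paper's terse final sentence, so nothing essential differs.
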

\begin{proof}
As in the proof of the above theorem, we can see that  at $p \in
\Sigma \setminus \Gamma$
\begin{eqnarray*}
2\pi\Theta(\Sigma,p)&\leq& 2\pi\Theta(\widehat{C},p)\\
&\leq& TC(\Gamma)-\kappa^{2}\area(p\cone\Gamma)\leq 2\pi C_{T}.
\end{eqnarray*}
By Theorem \ref{Taylor:cone}, we see that for $p\in \Sigma
\setminus \Gamma$, if $\Theta(\Sigma,p)=1$ then $\Sigma$ is a
plane. If $\Theta(\Sigma,p)=\frac{3}{2}$ then $\Sigma$ is a subset
of the $Y$-cone. If $\Theta(\Sigma,p)=C_{T}$ then $\Sigma$ is a
subset of the $T$-cone.
\end{proof}

%
% Section 4 
%
%%%%%%%%%%%%%%%%%%%%%%%%%%%%%%%%%%%%%%%%%%%%%%%%%%%%%%%%%%%%%%%%%%%%%%%
\section{Regularity of soap film-like surfaces in spaces with
bounded diameter}
%%%%%%%%%%%%%%%%%%%%%%%%%%%%%%%%%%%%%%%%%%%%%%%%%%%%%%%%%%%%%%%%%%%%%%%

Let $M^n$ be a simply-connected Riemannian manifold with diameter
$\leq \frac{\pi}{b}$ and sectional curvatures $K_M\leq b^2$, where
$b>0$. Recall that any two points of $M$ are connected by a unique
geodesic in $M$. As in section 3, we may define the {\em convex hull}
$\cv(S)$ of a subset $S$ of $M$. It is not difficult to see that
$\cv(S)$ is the intersection of closed geodesically convex sets
containing $S$, and it follows that if $\Sigma \in \mathcal{S}_\Gamma$
is strongly stationary with respect to a
graph $\Gamma$ in $M^n$, then $\Sigma \subset \cv(\Gamma)$.

% Def 4.1
\begin{df}
Let $\widehat{g}$ be a new metric on $C=p\cone \Gamma$ with constant
Gauss curvature $b^2$ such that the distance from $p$
remains the same as in the original metric $g$, and so does the
arclength element of $\Gamma$.
\end{df}
As in section 3, every geodesic from $p$
under $g$ remains a geodesic of equal length under $\widehat{g}$,
the length of any arc of $\Gamma$ remains the same, and the angles
between the tangent vector to $\Gamma$ and the geodesic from $p$
remain unchanged. $\widehat C = (C, \widehat g)$ may be
constructed as in Section 3, with the two-dimensional sphere of
radius $1/b$ replacing $\mathbb{H}^2(-\kappa^2)$.

% Prop 4.2
\begin{prop} \label{prop:density in the sphere}
Let $\Sigma \in \mathcal{S}_\Gamma$ be a strongly stationary surface in
$M^n$ and let $p$ be a point in $\Sigma \setminus \Gamma$. Then
$$\Theta(\Sigma,p)< \Theta(\widehat C,p),$$
unless $\Sigma$ is a cone over $p$ with totally
geodesic faces and constant Gauss curvature $b^2$.
\end{prop}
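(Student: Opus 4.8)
The plan is to follow the proof of Proposition \ref{prop:density in M} essentially line by line, replacing the hyperbolic Green's function by its spherical counterpart and the hyperbolic Hessian comparison by the one appropriate to the upper bound $K_M\le b^2$. Concretely, I would take $G(r)=\log\tan(br/2)$, the radial Green's function of the round sphere of Gauss curvature $b^2$; since $M$ is simply connected with $K_M\le b^2$ and $\dist$ is at most $\pi/b$, no conjugate points occur along a radial geodesic before distance $\pi/b$, so $r<\pi/b$ and $G'(r)=b/\sin(br)>0$ is well defined along every radial geodesic from $p$. On the constant-curvature cone $\widehat{C}$, whose faces are pieces of the sphere of curvature $b^2$, one has $|\nabla_{\widehat{C}}r|\equiv 1$ and $G$ is exactly harmonic, so $\triangle_{\widehat{C}}G\equiv 0$ away from $\widehat{p}$ and the singular edges.

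On each minimal piece $\Sigma_i$ I would compute, using the Hessian comparison theorem for $K_M\le b^2$ together with minimality $\overrightarrow{H}\equiv\overrightarrow{0}$, the analogue of \eqref{green ftn on M}, namely
\[
\triangle_{\Sigma_i}G(r)\ \ge\ 2b^{2}\,\frac{\cos(br)}{\sin^{2}(br)}\,\bigl(1-|\nabla_{\Sigma_i}r|^{2}\bigr).
\]
I would then repeat the divergence-theorem computation verbatim: integrate over $\Sigma_i\setminus B_\varepsilon(p)$, sum over $i$, discard the singular-curve boundary terms by the balancing condition \eqref{balancing}, and extract $-2\pi\Theta(\Sigma,p)$ from the integral over $\Sigma\cap\partial B_\varepsilon(p)$ using $G'(\varepsilon)\sim 1/\varepsilon$ and $\length(\Sigma\cap\partial B_\varepsilon(p))\sim 2\pi\varepsilon\,\Theta(\Sigma,p)$. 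The same computation on $\widehat{C}$, where $\triangle_{\widehat{C}}G\equiv 0$, gives the exact identity $2\pi\Theta(\widehat{C},p)=\int_{\widehat{\Gamma}}G'(r)\,\frac{\partial r}{\partial\nu_{\widehat{C}}}\,ds$. Finally I would compare the two boundary integrals exactly as before: the identity $\frac{\partial r}{\partial\nu_{C}}=\frac{\partial r}{\partial\nu_{\widehat{C}}}$ built into the construction of $\widehat{C}$, the pointwise inequality $\frac{\partial r}{\partial\nu_{C}}\ge\frac{\partial r}{\partial\nu_{\Sigma}}$ almost everywhere along $\Gamma$, and $G'>0$ combine to yield $\Theta(\Sigma,p)\le\Theta(\widehat{C},p)$. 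Tracing the equality case forces $|\nabla_{\Sigma}r|\equiv 1$ together with equality in the Hessian comparison, so that $\Sigma$ is a cone over $p$ with totally geodesic faces of Gauss curvature $b^2$, giving the strict inequality otherwise.

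The one genuinely new difficulty, absent in the negatively curved case, is the sign of $\cos(br)$. In Proposition \ref{prop:density in M} the factor $\cosh(\kappa r)$ is positive, so $\triangle_{\Sigma}G\ge 0$ holds everywhere and the density extraction yields a clean inequality. Here the lower bound for $\triangle_{\Sigma}G$ is nonnegative only where $r\le\pi/(2b)$, and unwinding the comparison shows that $\Theta(\Sigma,p)\le\Theta(\widehat{C},p)$ is equivalent to $\int_{\Sigma}\triangle_{\Sigma}G\,dA\ge 0$. Thus the crux is to confine the cone to the region $r\le\pi/(2b)$, i.e.\ the geodesic ball of radius $\pi/(2b)$ about $p$, on which $\cos(br)\ge 0$ and hence $\triangle_{\Sigma}G\ge 0$. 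The plan is to secure this confinement by combining the curvature upper bound (which makes small geodesic balls convex) with the containment $\Sigma\subset\cv(\Gamma)$ recalled at the start of Section 4, arranging that $\Gamma$, and therefore $p\cone\Gamma$ and $\Sigma$, lie within distance $\pi/(2b)$ of $p$. Establishing this containment from the stated hypotheses, rather than any of the now-routine integrations, is where I expect the real work to lie.
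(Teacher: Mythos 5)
Your main computation coincides step for step with the paper's own proof: the Green's function $G(r)=\log\tan(br/2)$, the integration of $\triangle_{\Sigma_i}G$ over $\Sigma_i\setminus B_\varepsilon(p)$, the use of the balancing condition \eqref{balancing} to kill the interior singular-curve terms, the extraction of $-2\pi\Theta(\Sigma,p)$ from the small geodesic sphere, the exact identity $2\pi\Theta(\widehat C,p)=\int_{\Gamma}\frac{b}{\sin br}\frac{\partial r}{\partial\nu_{\widehat C}}\,ds$ coming from harmonicity of $G$ on the constant-curvature cone, and the final comparison via $\frac{\partial r}{\partial\nu_{C}}\geq\frac{\partial r}{\partial\nu_{\Sigma}}$ together with $\frac{\partial r}{\partial\nu_{C}}=\frac{\partial r}{\partial\nu_{\widehat C}}$ and $G'>0$. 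Up to your last paragraph you are reproducing the argument the paper actually gives.

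The gap is in that last paragraph. You correctly observe that the lower bound $\triangle_{\Sigma_i}G\geq 2b^{2}\frac{\cos br}{\sin^{2}br}\,(1-|\nabla_{\Sigma_i}r|^{2})$ is manifestly nonnegative only where $\cos(br)\geq 0$, i.e.\ $r\leq\pi/(2b)$; the paper itself does not address this point and simply asserts the inequality \eqref{green ftn on S} with a citation to \cite{CG0}. But the remedy you propose --- confining $\Gamma$, and hence $p\cone\Gamma$ and $\Sigma$, to the ball $B_{\pi/(2b)}(p)$ --- cannot be extracted from the stated hypotheses and is not what the paper does. The standing assumptions give only $\mathrm{diam}(M)\leq\pi/b$, and $p$ is an \emph{arbitrary} point of $\Sigma\setminus\Gamma$; the containment $\Sigma\subset\cv(\Gamma)$ bounds nothing beyond $\dist(p,q)\leq\pi/b$ for $q\in\Gamma$. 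Already for $\Gamma$ a curve near the equator of a hemisphere of curvature $b^{2}$ and $p$ a point of $\Sigma$ near that equator, points of $\Gamma$ lie at distance close to $\pi/b$ from $p$, so $\cos(br)<0$ on a large part of the cone and of $\Sigma$, and no convexity argument will shrink this to $\pi/(2b)$. So the step you defer as ``the real work'' is a dead end as formulated: to close your version of the proof you would need either the stronger hypothesis $\mathrm{diam}(M)\leq\pi/(2b)$, or an argument that the \emph{integral} $\int_{\Sigma\setminus B_\varepsilon(p)}\triangle_{\Sigma}G\,dA$ is nonnegative without pointwise nonnegativity of the integrand --- neither of which appears in the paper's proof, which contains no confinement step at all.
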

\begin{proof}
The proof involves computations analogous to Proposition
\ref{prop:density in M} on $\Sigma$ and on
$\widehat C$, related to the well-known monotonicity inequality.

Let $G(r(x))=\log \tan(br(x)/2)$ be the Green's function of the
two-dimensional sphere of constant curvature $b^2$, where $r(x)$
is the distance from $p$ in
$M^n$. On an immersed minimal surface $\Sigma_i$ in
$M$, it follows from \cite{CG0} that
\begin{eqnarray}\label{green ftn on S}
\triangle_{\Sigma_i} G(r) = 2\frac{b^2\cos br}{\sin^{2} br}
(1-|\nabla_{\Sigma_i}r|^{2}) \geq 0.
\end{eqnarray}
Recall that $\Sigma = \cup_{i \in I} \Sigma_i \in
\mathcal{S}_\Gamma$ and each $\Sigma_i$ is minimal. Integrating
(\ref{green ftn on S}) over each $\Sigma_i \setminus
B_\varepsilon(p)$ for sufficiently small $\varepsilon >0$ gives
\begin{eqnarray*}
0\leq \int_{\Sigma_i \setminus
B_\varepsilon(p)}\triangle_{\Sigma_{i}}G\,dA
=\int_{\partial(\Sigma_i \setminus B_\varepsilon(p))}\frac{b}{\sin
br}\frac{\partial r}{\partial \nu_{\Sigma_i}}ds.
\end{eqnarray*}
Since $\partial(\Sigma_i \setminus
B_\varepsilon(p))=(\partial\Sigma_{i}\cap\Gamma)\cup (\partial
B_{\varepsilon}(p)\cap \Sigma_{i})\cup (\partial
\Sigma_{i}\setminus (\Gamma\cup \overline{B_{\varepsilon}(p)}))$,
summing over $i$ gives

\begin{eqnarray*}
0\leq \int_{\Gamma}\frac{b}{\sin br}\frac{\partial r}{\partial\nu_{\Sigma}}ds
+\int_{\Sigma \cap \partial B_{\varepsilon}(p)}\frac{b}{\sin
b\varepsilon}\frac{\partial r}{\partial\nu_{\Sigma}}ds +\sum_{i\in
I}\int_{\partial\Sigma_{i}\setminus(\Gamma\cup\overline{B_{\varepsilon}(p)})}
\frac{b}{\sin br}\frac{\partial r}{\partial\nu_{\Sigma_{i}}}ds .
\end{eqnarray*}
Applying the balancing condition (\ref{balancing}) at each point $p\in \partial \Sigma_i \setminus \Gamma$, one can see
that the last term vanishes.

Since along $\Sigma\cap \partial B_{\varepsilon}(p),$
$\frac{\partial r}{\partial\nu_{\Sigma}}\rightarrow -1$ uniformly
as $\varepsilon\rightarrow 0$, the second term converges to
$$\lim_{\varepsilon \rightarrow 0}-\frac{b\,\length (\Sigma \cap
\partial B_{\varepsilon}(p))}{ \sin b\varepsilon} =
-2\pi\Theta(\Sigma,p).$$

Hence we have
\begin{equation}\label{eq:a11}
2\pi\,\Theta(\Sigma,p)\leq \int_{\Gamma}
\frac{b}{\sin br}\frac{\partial r}{\partial\nu_{\Sigma}}ds.
\end{equation}

Similarly, one can estimate the density $\Theta (\widehat C,p)$ at
$p$ of the cone $p\cone \Gamma$ with the metric $\widehat g$ of
constant Gauss curvature $b^2$. Because
$\Gamma = \cup_{j=1}^m c_j$, where each arc $c_j$ is $C^2$ and $C^1$
up to the end points, the cone $C=p\cone \Gamma$ can be represented as
\begin{eqnarray*}
C=p\cone \Gamma = \cup \overline{A_j} =
\cup (A_j\cup\partial A_j),
\end{eqnarray*}
where $A_j=p\cone c_j$ is a $C^2$ surface, $C^1$ up to its
boundary, possibly not immersed. Write
$\widehat C=\cup \overline{\widehat A_j}$.
On each $\widehat A_j$, it follows from \cite{CG0} that
the Green's function $G(r)$ is harmonic because
\begin{eqnarray} \label{eq:harmonic}
\triangle_{\widehat C} G(r) = 2\frac{b^2\cos br}{\sin^{2} br}
(1-|\nabla_{\widehat C}r|^{2}) =0.
\end{eqnarray}
The divergence theorem yields
\begin{eqnarray} \label{eqn:cone minus ball}
0= \int_{A_{j}\setminus B_{\varepsilon}(p)}\triangle_{\widehat C}G\,dA
=\int_{\partial (A_{j}\setminus B_{\varepsilon}(p))}\frac{b}{\sin
br}\frac{\partial r}{\partial \nu_{\widehat C}} ds.
\end{eqnarray}
As before, each boundary $\partial (A_{j}\setminus B_{\varepsilon}(p))$
consists of three parts.

Summing the above equation (\ref{eqn:cone minus ball}) over
$j=1,\cdots,m$, since $\nu_{\widehat C}= \nu_C$ along $\Gamma$, we have
$$0= \int_{\Gamma}\frac{b}{\sin br}\frac{\partial
r}{\partial\nu_{\widehat C}}ds
+\int_{C \cap \partial B_{\varepsilon}(p)}\frac{b}{\sin
b\varepsilon}\frac{\partial r}{\partial\nu_{\widehat C}}ds
+\sum_{j}\int_{(p\cone \partial c_j) \setminus B_{\varepsilon}(p)}
\frac{b}{\sin br}\frac{\partial r}{\partial\nu_{\widehat A_{j}}}ds,$$
where
$\nu_{A_{j}}$ denotes the outward unit conormal vector along the
boundary $\partial A_{j}$ and $\nu_{C}(q)$ is defined to be
$\sum_{j=1}^{m}\{\nu_{A_{j}}(q):q\in \partial A_{j}\}$.
Note that along $p\cone(\partial c_{j})$, the conormal vector
$\nu_{{\widehat A_{j}}}$ and $\overline{\nabla} r$ are perpendicular, i.e.
$\frac{\partial r}{\partial\nu_{\widehat A_j}}\equiv 0$. Hence the last
term vanishes in the above equation. Since
$\frac{\partial r}{\partial \nu_{\widehat C}} \rightarrow 1$ as
$\varepsilon \rightarrow 0$ along
$C\cap\partial B_\varepsilon (p)$ and hence
\begin{eqnarray*}
-\frac{b\,\length (C\cap B_{\varepsilon}(p))}{\sin{b\varepsilon}}
\rightarrow -2\pi\, \Theta(C,p),
\end{eqnarray*}
one can see that the second term converges to $-2\pi \Theta(C,p)$.
Therefore we have
\begin{equation}\label{eq:a22}
 2\pi\Theta(C,p)=
\int_{\Gamma}\frac{b}{\sin br}\frac{\partial r}{\partial\nu_{C}}ds.
\end{equation}
Since $\frac{\partial r}{\partial \nu_{C}}\geq\frac{\partial
r}{\partial\nu_{\Sigma}}$ almost everywhere along $\Gamma$, it
follows from (\ref{eq:a11}) and (\ref{eq:a22}) that
$$2\pi\Theta(\Sigma,p)\leq
\int_{\Gamma}\frac{b}{\sin br}\frac{\partial
r}{\partial\nu_{\Sigma}}ds\leq \int_{\Gamma}\frac{b}{\sin
br}\frac{\partial r}{\partial\nu_{C}}ds=2\pi\Theta(C,p).$$
If equality holds, then $\Delta_{\Sigma} G \equiv0$. So we should
have $|\nabla_{\Sigma}r|\equiv 1$ by (\ref{eq:harmonic}), which
can happen only when $\Sigma$ is totally geodesic; and
$\triangle_\Sigma G \equiv 0$ \cite{CG0}.
\end{proof}

We prove the following Gauss-Bonnet formula for singular
two-dimensional cones in $M^n$.
%
% Prop 4.3
\begin{prop}[Gauss-Bonnet formula] \label{prop:GB in S}
Let $p$ be a point of $M^n\setminus \Gamma$ for a
graph  $\Gamma =\cup c_i$. Let $\widehat{C}=(C, \widehat{g})$ for
$C=p\cone\Gamma$.  Then we have 
$$2\pi\Theta(\widehat{C},p)=
b^2 \area (\widehat C) -\sum_{i=1}^{m}\int_{c_{i}}
\overrightarrow{k}\cdot\nu_{c_{i}}ds +
\sum_{i=1}^{m}\sum_{j=0,1}(\frac{\pi}{2}-
\angle_{q^{i}_{j}}(T_{i}(q^{i}_{j}),\overline{q^{i}_{j}p}))$$
where $\overrightarrow{k}$ is the curvature vector of $c_{i}$ in
${\widehat C}$, $\nu_{c_{i}}$ is the outward unit conormal
vector to $p\cone\Gamma$ along $c_{i}$,
$\angle_{q^{i}_{j}}(T_{i}(q^{i}_{j}),\overline{q^{i}_{j}p})$ is
the angle at $q^{i}_{j}$ between the tangent vector
$T_{i}(q^{i}_{j})$ to $c_{i}$ and the geodesic
$\overline{q^{i}_{j}p}$.
\end{prop}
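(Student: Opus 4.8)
The plan is to follow the same architecture as the proof of Proposition \ref{prop:GB for widehat of C}, the only essential change being that the intrinsic Gauss curvature of $\widehat C$ is now $\widehat K\equiv +b^2$ rather than $-\kappa^2$; this sign flip is exactly what produces the $+\,b^2\area(\widehat C)$ term on the right-hand side. First I would prove the formula when $\Gamma$ is a smooth closed (possibly non-simple, immersed) curve, and then extend to an arbitrary graph by passing to the double cover via Lemma \ref{Euler} and dividing by two.

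For the smooth case, choose $\varepsilon>0$ small enough that $\overline{B_\varepsilon(p)}\cap\Gamma=\emptyset$ and set $A=\widehat C\setminus B_\varepsilon(p)$, an immersed annulus, so that $\chi(A)=0$. The classical Gauss--Bonnet formula on $A$ reads
\[
\int_A \widehat K\,dA-\int_\Gamma\overrightarrow{k}\cdot\nu_{\widehat C}\,ds-\int_{\partial B_\varepsilon(p)\cap\widehat C}\overrightarrow{k}\cdot\nu_{\widehat C}\,ds=2\pi\,\chi(A)=0.
\]
Since $\widehat K\equiv b^2$ we have $\int_A\widehat K\,dA=b^2\area(A)\to b^2\area(\widehat C)$ as $\varepsilon\to 0$. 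Along the inner circle $\partial B_\varepsilon(p)\cap\widehat C$ the geodesic curvature with respect to the spherical metric is $\widehat k_0(\varepsilon)=b\cot(b\varepsilon)$, as in Proposition \ref{prop:density in the sphere}, and the length of this circle is asymptotic to $2\pi\,\Theta(\widehat C,p)\,\frac{\sin b\varepsilon}{b}$. Hence the inner boundary integral converges to $\lim_{\varepsilon\to0} b\cot(b\varepsilon)\cdot 2\pi\Theta(\widehat C,p)\frac{\sin b\varepsilon}{b}=2\pi\,\Theta(\widehat C,p)$. Letting $\varepsilon\to0$ gives
\[
2\pi\,\Theta(\widehat C,p)=b^2\area(\widehat C)-\int_\Gamma\overrightarrow{k}\cdot\nu_{\widehat C}\,ds,
\]
the smooth-curve version of the claim.

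To reach the general graph $\Gamma=\cup_{i=1}^m c_i$, I would apply Lemma \ref{Euler} to the double cover $\Gamma'=\cup_{j=1}^{2m} c_j'$, a piecewise-smooth immersion of $\mathbb S^1$, and apply the smooth formula to $\widehat C'=\widehat p\cone\Gamma'$. At each vertex where consecutive edges $c_i,c_k$ meet, the pair $(\frac{\pi}{2}-\angle_{q^i_1}(T_i,\overline{q^i_1 p}))+(\frac{\pi}{2}-\angle_{q^k_0}(T_k,\overline{q^k_0 p}))$ collapses to the exterior angle $\pi-\angle(T_i,T_k)$ of $\Gamma'$, exactly as in Proposition \ref{prop:GB for widehat of C}; these exterior angles supply the concentrated geodesic-curvature contributions required by the piecewise-smooth Gauss--Bonnet formula. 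Since $\widehat C'$ double-covers $\widehat C$, the area doubles and each edge integral and each angle term is counted twice, so dividing the resulting identity by two produces the stated formula.

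The step demanding the most care is the validity of the construction in the positive-curvature regime: the Green's function $G(r)=\log\tan(br/2)$ and the geodesic-curvature computation are singular not only at $r=0$ but also at $r=\pi/b$, so I must ensure that every radial geodesic $\overline{pq}$, $q\in\Gamma$, has length $r(s)<\pi/b$ and carries no conjugate point (conjugate points on the sphere of radius $1/b$ occur precisely at distance $\pi/b$). This is exactly what the hypotheses of Section 4 guarantee, since ${\rm diam}(M)\le\pi/b$ forces $r(s)\le\pi/b$, and $p\notin\Gamma$ together with $\Sigma,\Gamma\subset\cv(\Gamma)$ keeps the radial geodesics short enough for $\widehat C$ to be a genuine immersed cone on which classical Gauss--Bonnet applies. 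The remaining work—convergence of the inner-circle term and the bookkeeping of the doubled quantities—is a routine adaptation of the $-\kappa^2$ case.
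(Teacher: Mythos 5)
Your proposal is correct and follows exactly the route the paper intends: the paper's own proof of Proposition \ref{prop:GB in S} is just the one-line remark that the arguments of Proposition \ref{prop:GB for widehat of C} carry over, and you have carried them over faithfully, with the sign flip $\widehat K=+b^2$ producing the $+\,b^2\area(\widehat C)$ term and the Euler double-cover argument handling the vertex contributions. Your added attention to the cut locus issue (keeping $r<\pi/b$ via the diameter bound so that $G(r)=\log\tan(br/2)$ and the radial Jacobi fields remain nonsingular) is a point the paper leaves implicit, and is worth noting.
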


\begin{pf}
Similar arguments as in the proof of Proposition 
\ref{prop:GB for widehat of C} give the desired result. \qed 
\end{pf}

As a consequence of the density comparison (Proposition
\ref{prop:density in the sphere}) and the Gauss-Bonnet Theorem for
two-dimensional cones in $M^n$ (Proposition
\ref{prop:GB in S}), there follows
% Thm 4.4
\begin{thm}
Let $M^n$ be a manifold with $K_M \leq b^2$ and diameter
$\leq \frac{\pi}{b}$. Let $\Gamma$ be a graph in $M^n$ and let $\Sigma
\in \mathcal{S}_\Gamma$ be a strongly stationary surface with
respect to $\Gamma$ in  $M^n$. Then for $p\in M^n$ we have
\begin{eqnarray*}
2\pi \Theta (\Sigma,p) \leq TC(\Gamma) +b^2\area(p\cone\Gamma).
\end{eqnarray*}
\end{thm}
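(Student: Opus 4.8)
The plan is to follow the proof of Theorem \ref{thm:3pi} almost verbatim, replacing the hyperbolic data and the curvature $-\kappa^2$ by the spherical data and $b^2$, and substituting the three spherical ingredients already at hand: the density comparison (Proposition \ref{prop:density in the sphere}), the Gauss--Bonnet formula for singular cones (Proposition \ref{prop:GB in S}), and the geodesic-curvature comparison (Proposition \ref{prop:CG1}). Fix $p\in M^n$. If $p\notin\Sigma$ then $\Theta(\Sigma,p)=0$ and the inequality is trivial, so I assume $p\in\Sigma\setminus\Gamma$; the boundary case $p\in\Gamma$ follows by applying the estimate off $\Gamma$ and letting the base point approach $\Gamma$. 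Since $\mathrm{diam}(M)\le\pi/b$, every radial geodesic $\overline{pq}$, $q\in\Gamma$, has length $\le\pi/b$, so $C=p\cone\Gamma$ is well defined and carries no conjugate points along its radial geodesics; this is precisely the hypothesis needed both to construct $\widehat C=(C,\widehat g)$ and to invoke Proposition \ref{prop:CG1}.

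First I would apply Proposition \ref{prop:density in the sphere} to obtain $2\pi\Theta(\Sigma,p)\le 2\pi\Theta(\widehat C,p)$. Then Proposition \ref{prop:GB in S} rewrites the right-hand side as
\[
2\pi\Theta(\widehat C,p)=b^2\area(\widehat C)-\sum_{i=1}^m\int_{c_i}\overrightarrow{k}\cdot\nu_{c_i}\,ds+\sum_{i=1}^m\sum_{j=0,1}\Big(\frac{\pi}{2}-\angle_{q^i_j}\big(T_i(q^i_j),\overline{q^i_j p}\big)\Big),
\]
so that it only remains to dominate the last two sums by $TC(\Gamma)$. Here $b^2\area(\widehat C)$ denotes the area of $p\cone\Gamma$ measured in the constant-curvature metric $\widehat g$, which is the quantity appearing in the statement.

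For the curvature integral I would argue exactly as in Theorem \ref{thm:3pi}. The ambient bound $K_M\le b^2$ forces the intrinsic Gauss curvature of the cone to satisfy $K_C\le b^2=\widehat K$ (the radial rulings are geodesics, so $\det II\le 0$ in the Gauss equation), and there are no conjugate points, so Proposition \ref{prop:CG1} supplies the geodesic-curvature comparison between $\widehat C$ and $C$ needed to pass from $-\sum_i\int_{c_i}\overrightarrow{k}\cdot\nu_{\widehat C}\,ds$ to $-\sum_i\int_{c_i}\overrightarrow{k}\cdot\nu_{C}\,ds$; the latter is at most $\int_{\Gamma^{\rm reg}}|\overrightarrow{k}|\,ds$ because $|\overrightarrow{k}\cdot\nu_C|\le|\overrightarrow{k}|$. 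For the vertex terms, at each vertex $q$ the radial direction $e=\overline{qp}/|\overline{qp}|$ is one admissible unit vector in the definition of $tc(q)$, whence $\sum_{a_k\ni q}\big(\tfrac{\pi}{2}-\angle_q(T_k(q),e)\big)\le tc(q)$; summing over $V(\Gamma)$ gives $\sum_i\sum_j(\tfrac{\pi}{2}-\angle_{q^i_j})\le\sum_{q\in V(\Gamma)}tc(q)$. Adding the two estimates yields $2\pi\Theta(\Sigma,p)\le b^2\area(\widehat C)+\int_{\Gamma^{\rm reg}}|\overrightarrow{k}|\,ds+\sum_{q}tc(q)=b^2\area(\widehat C)+TC(\Gamma)$, as asserted.

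The step that requires the most care, and the genuine novelty over the hyperbolic argument, is the handling of the curvature sign. In Theorem \ref{thm:3pi} the area term carried the favorable factor $-\kappa^2\le 0$, so one could freely enlarge it by replacing $\area(\widehat C)$ with the original cone area via $\area(C)\le\area(\widehat C)$. With $b^2>0$ that replacement would run the wrong way, so the area term must be retained exactly as Gauss--Bonnet produces it; thus positive curvature genuinely contributes the summand $b^2\area(\widehat C)$ rather than a correction one may discard. The second delicate point is geometric: one must check that $\mathrm{diam}(M)\le\pi/b$ keeps every radial geodesic strictly shorter than the first conjugate distance $\pi/b$ of the comparison sphere, without which neither $\widehat C$ nor Proposition \ref{prop:CG1} is available. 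Beyond these two points I expect no difficulty, since all of the analytic content has already been isolated in Propositions \ref{prop:density in the sphere}, \ref{prop:GB in S}, and \ref{prop:CG1}.
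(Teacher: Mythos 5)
Your proposal is correct and follows essentially the same route as the paper, which in fact offers no written proof of this theorem beyond the remark that it ``follows'' from Proposition \ref{prop:density in the sphere} and Proposition \ref{prop:GB in S}; your assembly of those two propositions with Proposition \ref{prop:CG1} and the bound of the vertex angles by $tc(q)$ is exactly the intended argument. The one substantive point you raise --- that with $b^2>0$ the comparison $\area(C)\le\area(\widehat C)$ runs the wrong way, so the chain of inequalities delivers $TC(\Gamma)+b^2\area(\widehat C)$ rather than $TC(\Gamma)+b^2\area(C)$ --- is a genuine issue with the paper's statement rather than with your argument: you resolve it by reading $\area(p\cone\Gamma)$ as the $\widehat g$-area, which makes your proof complete, but note that the paper's definition of $\mathcal{\overline{A}}(\Gamma)$ and its use in Theorems \ref{only Y} and \ref{only T} most naturally refer to the original metric, so the discrepancy you identified should be recorded rather than silently absorbed.
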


%
% Def 4.5
\begin{df}
{\rm Define the} maximum cone area
$\mathcal{\overline{A}}(\Gamma)$ {\rm of a graph} $\Gamma \subset
M^n$ {\rm as}
$$\mathcal{\overline{A}}(\Gamma):=
\sup_{p\in \cv(\Gamma)}\area({p\cone\Gamma}).$$
\end{df}

Now we state and prove the regularity theorems for soap film-like
surfaces spanning graphs with small cone total curvature in
a manifold of bounded diameter.
%
% Thm 4.6
\begin{thm}\label{only Y}
Let $M^n$ be a manifold with $K_M \leq b^2$ and diameter
$\leq \frac{\pi}{b}$, and let $\Gamma$ be a graph with cone total
curvature
$TC(\Gamma)\leq 3\pi-b^2\mathcal{\overline{A}}(\Gamma)$.
Let $\Sigma \in \mathcal{S}_\Gamma$ be a strongly stationary
surface with respect to $\Gamma$ in  $M^n$. Then
$\Sigma$ is either an embedded surface or a subset of the
$Y$-singular cone formed by three totally geodesic surfaces of
constant Gauss curvature $b^2$.
\end{thm}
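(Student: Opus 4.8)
Theorem \ref{only Y} is the positive-curvature analogue of Theorem \ref{thm:3pi}, and its proof should mirror the negative-curvature case almost line for line. The plan is to combine three ingredients already available in this section: the density comparison of Proposition \ref{prop:density in the sphere}, the Gauss--Bonnet formula of Proposition \ref{prop:GB in S}, and the curvature comparison that plays the role of Propositions \ref{prop:CG1} and \ref{prop:CG2} in the sphere setting. The target is to bound $2\pi\Theta(\Sigma,p)$ by $3\pi$ at every interior point $p$, and then invoke the classification of stationary cones of density $3/2$ to conclude that either $\Sigma$ is embedded or it is a subset of the $Y$-cone.

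\begin{proof}
Let $p\in\Sigma\setminus\Gamma$. Since $\Sigma$ is strongly stationary and $M^n$ has diameter $\leq\pi/b$, we have $p\in\cv(\Gamma)$, so $\area(p\cone\Gamma)\leq\mathcal{\overline{A}}(\Gamma)$. By Proposition \ref{prop:density in the sphere},
$$2\pi\Theta(\Sigma,p)\leq 2\pi\Theta(\widehat C,p).$$
Applying Proposition \ref{prop:GB in S} to $\widehat C=(C,\widehat g)$,
$$2\pi\Theta(\widehat C,p)= b^2\area(\widehat C)-\sum_{i=1}^m\int_{c_i}\overrightarrow{k}\cdot\nu_{c_i}\,ds+\sum_{i=1}^m\sum_{j=0,1}\Big(\frac{\pi}{2}-\angle_{q^i_j}(T_i(q^i_j),\overline{q^i_j p})\Big).$$
The comparison results for the constant-curvature cone (the spherical analogues of Propositions \ref{prop:CG1} and \ref{prop:CG2}, obtained by the same Jacobi-field and Riccati argument with $\widehat K\equiv b^2\geq K_M$) give $\widehat k\leq k$ along $\Gamma$ and $\area(\widehat C)\leq\area(p\cone\Gamma)$. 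Since the sign of $b^2\area(\widehat C)$ is now positive, the area inequality is used in the direction $b^2\area(\widehat C)\leq b^2\area(p\cone\Gamma)$, and the geodesic-curvature inequality gives $-\int_{c_i}\widehat k\,ds\leq-\int_{c_i}k\,ds$. Hence
$$2\pi\Theta(\widehat C,p)\leq b^2\area(p\cone\Gamma)-\int_\Gamma k\,ds+\sum_{i=1}^m\sum_{j=0,1}\Big(\frac{\pi}{2}-\angle_{q^i_j}(T_i(q^i_j),\overline{q^i_j p})\Big)\leq TC(\Gamma)+b^2\area(p\cone\Gamma),$$
where the last step bounds the angle sum by $\sum_q tc(q)$ via the definition of the contribution to cone total curvature. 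Using $\area(p\cone\Gamma)\leq\mathcal{\overline{A}}(\Gamma)$ and the hypothesis $TC(\Gamma)\leq 3\pi-b^2\mathcal{\overline{A}}(\Gamma)$,
$$2\pi\Theta(\Sigma,p)\leq TC(\Gamma)+b^2\mathcal{\overline{A}}(\Gamma)\leq 3\pi,$$
so $\Theta(\Sigma,p)\leq 3/2$. If $\Theta(\Sigma,p)<3/2$ for every such $p$, then $\Sigma$ is embedded. Otherwise equality $\Theta(\Sigma,p)=3/2$ holds at some point; then by Proposition \ref{prop:density in the sphere} the surface $\Sigma$ is a cone over $p$ with totally geodesic faces of constant Gauss curvature $b^2$, and since the only stationary cone of density $3/2$ is the $Y$-cone, $\Sigma$ is a subset of the $Y$-singular cone.
\end{proof}

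The main obstacle is orienting the curvature-comparison inequalities correctly now that $b^2>0$ flips the sign of the curvature term in Gauss--Bonnet: one must verify that the spherical versions of Propositions \ref{prop:CG1} and \ref{prop:CG2} deliver $\area(\widehat C)\leq\area(p\cone\Gamma)$ and $\widehat k\leq k$ in precisely the directions needed so that both the area term $b^2\area(\widehat C)$ and the curvature integral push the estimate \emph{upward} toward $TC(\Gamma)$. This in turn requires the no-conjugate-points hypothesis along radial geodesics, which is exactly what the diameter bound $\leq\pi/b$ guarantees; checking that this hypothesis is available throughout $\cv(\Gamma)$ is the step deserving genuine care rather than mechanical transcription from Section 3.
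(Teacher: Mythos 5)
Your overall architecture is the same as the paper's, which simply chains Proposition \ref{prop:density in the sphere}, Proposition \ref{prop:GB in S} and the unnamed density theorem preceding Theorem \ref{only Y}; but the one step you yourself single out as ``deserving genuine care'' is precisely the step that fails as written. You claim that the spherical analogues of Propositions \ref{prop:CG1} and \ref{prop:CG2}, ``obtained by the same Jacobi-field and Riccati argument with $\widehat K\equiv b^2\geq K$,'' yield $\area(\widehat C)\leq\area(p\cone\Gamma)$. They yield the opposite. The hypothesis of Proposition \ref{prop:CG1} is $\widehat K\geq K$, which is exactly what holds here too (the induced Gauss curvature of the geodesic cone satisfies $K\leq K_M\leq b^2=\widehat K$), and under that very hypothesis Proposition \ref{prop:CG2} concludes $\area(C)\leq\area(\widehat C)$: with the Jacobi fields normalized at the boundary ($f(L)=\widehat f(L)$, forced by preservation of the arclength of $\Gamma$), the Riccati inequality $k_0\geq\widehat k_0$ integrates to $f(t)\leq\widehat f(t)$, so the larger-curvature metric has the \emph{larger} area. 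Since $b^2>0$, the inequality $b^2\area(\widehat C)\leq b^2\area(p\cone\Gamma)$ that your chain of estimates needs is therefore backwards, and the route through the Gauss--Bonnet formula for $\widehat C$ does not close. (Your treatment of the geodesic-curvature term and of the angle sum is consistent with the paper's conventions and is fine.)

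The repair is to keep the comparison metric out of the area term entirely: apply Gauss--Bonnet directly to $C=p\cone\Gamma$ with its induced metric $g$. Letting $\varepsilon\to0$ on the annulus $C\setminus B_\varepsilon(p)$ gives
$2\pi\Theta(C,p)=\int_C K_{\rm int}\,dA-\sum_{i}\int_{c_i}\overrightarrow{k}\cdot\nu_{c_i}\,ds+\sum_{i}\sum_{j=0,1}\bigl(\tfrac{\pi}{2}-\angle_{q^i_j}(T_i(q^i_j),\overline{q^i_j p})\bigr)$,
and now $K_{\rm int}\leq K_M\leq b^2$ bounds the first term by $b^2\area(p\cone\Gamma)$ with the area of the \emph{original} cone, while $-\overrightarrow{k}\cdot\nu_{c_i}\leq|\overrightarrow{k}|$ and the angle sum is at most $\sum_q tc(q)$. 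Combined with $2\pi\Theta(\Sigma,p)\leq2\pi\Theta(C,p)$, which is what (\ref{eq:a11}) and (\ref{eq:a22}) actually deliver, this is the paper's intermediate bound $2\pi\Theta(\Sigma,p)\leq TC(\Gamma)+b^2\area(p\cone\Gamma)$, and the remainder of your argument --- the use of $p\in\cv(\Gamma)$, the conclusion $2\pi\Theta(\Sigma,p)\leq3\pi$, and the dichotomy between embeddedness and the totally geodesic $Y$-cone --- then goes through exactly as in the paper.
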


\begin{proof}
At $p \in \Sigma \setminus \Gamma$, we have
$$2\pi\Theta(\Sigma,p)\leq2\pi\Theta(p\cone\Gamma,p)
\leq TC(\Gamma)+ b^2 \area(p\cone\Gamma)\leq 3\pi.$$
If $\Theta(\Sigma,p)<\frac{3}{2}$, then $\Sigma$ is an embedded
surface. Otherwise, $\Theta(\Sigma,p)=\frac{3}{2}$ which means
that $\Sigma$ is a cone with vertex $p$ and totally geodesic
faces. Since the only stationary cone with density $\frac{3}{2}$
is the $Y$-cone, $\Sigma$ is a subset of the $Y$-cone.
\end{proof}
Given a strongly stationary surface $\Sigma$ in
$\mathcal{S}_\Gamma$, we have seen that the first nontrivial upper
bound for the density of $\Sigma$, other than $1$, is $3/2$. In
order to find a larger upper bound for density, we consider the
case of ambient dimension $n=3$.
Note that the tangent cone of a strongly stationary
soap film-like surface in $M^3$ is exactly the same as in 
Euclidean space $\mathbb{R}^3$. Since
there are only three area minimizing cones in $\mathbb{R}^3$
\cite{AT}, the only possible candidate for a larger bound for
density is the $T$-singularity cone.

%
% Thm 4.7
\begin{thm}\label{only T}
Let $M^n$ be a manifold with $K_M \leq b^2$ and diameter
$\leq \frac{\pi}{b}$. Let $\Gamma$ be a graph in $M^3$ with
$TC(\Gamma)\leq 2\pi C_{T}-b^2\mathcal{\overline{A}}(\Gamma)$ and
let $\Sigma \in \mathcal{S}_{\Gamma}$ be embedded as an
$(\mathbf{M},0,\delta)$-minimizing set with respect to $\Gamma$.
Then $\Sigma$ can have only $Y$ singularities unless $\Sigma$ is a
subset of the $T$ stationary cone with totally geodesic faces.
\end{thm}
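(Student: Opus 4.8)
The plan is to follow the proof of Theorem \ref{thm:2pi} line for line, replacing each negatively-curved ingredient by its bounded-diameter counterpart from the present section; the entire content is a pointwise density bound $\Theta(\Sigma,p) \leq C_T$ at interior points, after which Taylor's classification of tangent cones finishes the argument.

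First I would fix $p \in \Sigma \setminus \Gamma$ and reproduce the density chain established in the proof of Theorem \ref{only Y}. The density comparison (Proposition \ref{prop:density in the sphere}) gives $\Theta(\Sigma,p) \leq \Theta(\widehat C, p)$, and the Gauss--Bonnet identity for the constant-curvature cone (Proposition \ref{prop:GB in S}), combined with the geodesic-curvature and area comparisons, yields
$$2\pi\,\Theta(\Sigma,p) \leq 2\pi\,\Theta(\widehat C,p) \leq TC(\Gamma) + b^2\,\area(p\cone\Gamma).$$
Because $\Sigma$ is strongly stationary it satisfies $\Sigma \subset \cv(\Gamma)$, so $p \in \cv(\Gamma)$ and therefore $\area(p\cone\Gamma) \leq \mathcal{\overline{A}}(\Gamma)$. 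Here the positivity $b^2 > 0$ is what makes the supremum the correct quantity: using the hypothesis $TC(\Gamma) \leq 2\pi C_T - b^2\,\mathcal{\overline{A}}(\Gamma)$ I would conclude
$$2\pi\,\Theta(\Sigma,p) \leq TC(\Gamma) + b^2\,\mathcal{\overline{A}}(\Gamma) \leq 2\pi C_T,$$
that is, $\Theta(\Sigma,p) \leq C_T$.

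The final step is local. Since $\Sigma$ is embedded as an $(\mathbf{M},0,\delta)$-minimizing set in the $3$-manifold $M^3$, its tangent cone at any interior point is area minimizing, and by Theorem \ref{Taylor:cone} (the tangent-cone analysis being identical to the Euclidean case) the only possibilities are the plane with $\Theta = 1$, the $Y$-cone with $\Theta = 3/2$, and the $T$-cone with $\Theta = C_T$. The bound $\Theta(\Sigma,p) \leq C_T$ excludes nothing among these three, so by Theorem \ref{Taylor:regularity} the surface has at most $Y$-singularities, with the single exception that if $\Theta(\Sigma,p) = C_T$ is attained then the tangent cone is the $T$-cone and $\Sigma$ is, near $p$, a subset of the $T$ stationary cone with totally geodesic faces.

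I expect no serious obstacle in this theorem itself, since it is a direct transcription of Theorem \ref{thm:2pi}; the genuine work was front-loaded into the section's comparison results. The one point demanding care is the direction of the area estimate feeding the middle inequality above: in the bounded-diameter regime the model is the round sphere of radius $1/b$, so the comparison underlying the density estimate runs opposite to Proposition \ref{prop:CG2}, and one must confirm that the sign of the $b^2\,\area$ term in Proposition \ref{prop:GB in S} combines with the \emph{supremum} $\mathcal{\overline{A}}(\Gamma)$ (rather than the infimum) to give the stated bound. Granting the density estimate already recorded in this section, the conclusion follows exactly as above.
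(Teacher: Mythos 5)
Your proposal is correct and follows essentially the same route as the paper: the density chain $2\pi\Theta(\Sigma,p)\leq 2\pi\Theta(p\cone\Gamma,p)\leq TC(\Gamma)+b^2\area(p\cone\Gamma)\leq 2\pi C_T$ (using $p\in\cv(\Gamma)$ and the supremum $\mathcal{\overline{A}}(\Gamma)$, with the sign of the $b^2$ term handled exactly as you note), followed by Taylor's classification of area-minimizing tangent cones in dimension three. The only cosmetic difference is that the paper states the equality case globally ($\Sigma$ a subset of the $T$-cone, via the rigidity clause of Proposition \ref{prop:density in the sphere}) where you say ``near $p$,'' but the underlying argument is the same.
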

\begin{proof}
For any point $p \in \Sigma \setminus \Gamma$ we have
$$2\pi\Theta(\Sigma,p)\leq2\pi\Theta(p\cone\Gamma,p)
\leq TC(\Gamma)+b^2\area(p\cone\Gamma)\leq 2\pi C_{T}.$$
It follows from Theorem \ref{Taylor:cone} that the tangent cone of
an $(\mathbf{M}, 0,\delta)$-minimal set $\Sigma$ at $p$ is
area-minimizing with respect to the intersection with the unit
sphere centered at $p$, and that the plane, the $Y$-cone and the
$T$-cone are the only possibility for a area-minimizing tangent
cone. Hence for $p\in \Sigma \setminus \Gamma$, if
$\Theta(\Sigma,p)=1$ then $\Sigma$ is a plane. If
$\Theta(\Sigma,p)=\frac{3}{2}$ then $\Sigma$ is the $Y$-cone.
Moreover if $\Theta(\Sigma,p)=C_{T}$ then $\Sigma$ is the
$T$-cone.
\end{proof}

\begin{Rmk}
We do not know whether in general an
$(\mathbf{M}, 0, \delta)$-minimizing set
with respect to a graph is an element of the class
$\mathcal{S}_{\Gamma}$ or not.  Note that an
$(\mathbf{M},0,\delta)$-minimal set $\Sigma$ in
$\mathcal{S}_\Gamma$ with variational boundary $\Gamma$ is
strongly stationary with respect to $\Gamma$. However the converse
is not true in general. For instance, consider the cone over the
$1$-skeleton $\Gamma$ of the cube. It is strongly stationary with
respect to $\Gamma$, but not an $(\mathbf{M},0,\delta)$-minimal
set. (See \cite{Taylor}.)
\end{Rmk}

For our final theorem, we return to the context of minimal branched
immersions of surfaces, as treated in \cite{EWW} and in \cite{CG}.
Note that the density of such a surface $\Sigma$ in $M^n$ must be
an integer $\geq 1$ at each point. At a branch point, the density 
is $\geq 2$; at a point $p\in M$ of self-intersection, the density
$\Theta(\Sigma,p)$ equals the number of pieces of surface which
intersect at $p$.  In the same spirit as the theorems above, which
refer to
$C_Y = \frac{3}{2}$ and $C_T\approx 1.8245$, we may define 
$C_X = 2$, the minimum density at a self-intersection point or
branch point of a branched immersion.

%
% Thm 4.8
\begin{thm}
Let $M^n$ be a manifold with $K_M \leq b^2$ and diameter
$\leq \frac{\pi}{b}$. Let $\Gamma$ be a simple closed curve in 
$M^3$ with
$TC(\Gamma) < 2\pi C_X-b^2\mathcal{\overline{A}}(\Gamma)$, and
let $\Sigma$ be a branched immersion of a compact surface into
$M^n$ with boundary $\Gamma$.
Then $\Sigma$ is embedded.
\end{thm}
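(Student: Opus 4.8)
The plan is to mirror the proofs of Theorems \ref{only Y} and \ref{only T}: combine the density estimate $2\pi\Theta(\Sigma,p)\leq TC(\Gamma)+b^2\area(p\cone\Gamma)$ with the observation that a failure of embeddedness forces a point whose density is at least $C_X=2$. First I would check that the hypotheses place us inside the scope of that estimate. A minimal branched immersion $\Sigma$ with boundary $\Gamma$ is stationary in $M^n\setminus\Gamma$ and strongly stationary with respect to $\Gamma$, with conormal field of norm at most one along $\Gamma$; its finitely many branch points are isolated, hence removable singularities for the Green's-function computation of Proposition \ref{prop:density in the sphere}. Thus the density estimate stated just before this theorem applies, and moreover $\Sigma\subset\cv(\Gamma)$. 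In particular every point $p$ of $\Sigma$ satisfies $p\in\cv(\Gamma)$, so $\area(p\cone\Gamma)\leq\mathcal{\overline{A}}(\Gamma)$.

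Next I would record the density dichotomy noted immediately before the statement: for a branched immersion $\Theta(\Sigma,p)$ is a positive integer at each point, equal to $1$ at an embedded regular point, at least $2$ at a branch point, and equal to the number of sheets meeting at a point of self-intersection. Consequently $\Sigma$ fails to be embedded precisely when some point carries density at least $C_X=2$, and since $\Gamma$ is a simple closed curve any such point must lie in the interior $\Sigma\setminus\Gamma$.

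The conclusion is then immediate. For any $p\in\Sigma\setminus\Gamma$, chaining the density estimate with $p\in\cv(\Gamma)$ and the curvature hypothesis yields
$$2\pi\Theta(\Sigma,p)\leq TC(\Gamma)+b^2\area(p\cone\Gamma)\leq TC(\Gamma)+b^2\mathcal{\overline{A}}(\Gamma)<2\pi C_X,$$
so $\Theta(\Sigma,p)<2$ everywhere in the interior. By the dichotomy $\Sigma$ has no branch points and no self-intersections, hence is embedded. The one step I would be most careful about is the first: justifying that the monotonicity and density machinery, framed in this paper for $\Sigma\in\mathcal{S}_\Gamma$, genuinely covers a branched immersion together with its branch-point singularities. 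This is exactly the setting treated in \cite{EWW} and \cite{CG}, so I would invoke those references for the removability of branch points and the resulting density estimate rather than reprove it.
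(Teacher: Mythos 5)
Your argument is exactly the paper's: the authors also reduce the theorem to the density estimate $2\pi\Theta(\Sigma,p)\leq TC(\Gamma)+b^2\area(p\cone\Gamma)$ for $p$ in the convex hull of $\Gamma$, combined with the fact that a branched immersion has integer density which is at least $C_X=2$ at any branch point or self-intersection, deferring the applicability of the monotonicity machinery to branched immersions to \cite{EWW} and \cite{CG}. Your proposal is correct and matches the intended proof, including the one point you flag as needing care.
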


The proof is analogous to the proofs of Theorems \ref{only Y} 
and \ref{only T}, showing that the density of $\Sigma$ at
any point of the convex hull of $\Gamma$ is less than two. This
result is Theorem 1 of \cite{CG} in the specific case where $M$ 
is the $n$-dimensional hemisphere of constant sectional curvature 
$b^2$.

\vspace{1cm}
\ \  \\
\noindent Robert Gulliver\\
School of Mathematics, University of Minnesota,
Minneapolis, MN 55455, USA\\
{\tt e-mail:gulliver@math.umn.edu}\\

\noindent Sung-Ho Park\\
School of Mathematics, Korea Institute for Advanced Study, 207-43, Cheongnyangni 2-dong, Dongdaemun-gu, Seoul 130-722, Korea\\
{\tt e-mail:shubuti@kias.re.kr}\\

\noindent Juncheol Pyo\\
Department of Mathematics, Seoul National University, Seoul 151-742, Korea\\
{\tt e-mail:jcpyo@snu.ac.kr}\\

\noindent Keomkyo Seo\\
Department of Mathematics, Sookmyung Women's University, Hyochangwongil 52, Yongsan-ku, Seoul 140-742, Korea\\
e-mail : kseo@sookmyung.ac.kr
{\tt e-mail:kseo@kias.re.kr}

\end{document}